\numberwithin{equation}{section}
\newtheorem{theorem}{Theorem}[section]
\newtheorem{corollary}[theorem]{Corollary}
\newtheorem{lemma}[theorem]{Lemma}
\newcommand{\fktvec}{\Big(f_k(t)\Big)_{k=0}^{n-1}}
\newcommand{\gktvec}{\Big(g_k(t)\Big)_{k=0}^{n-1}}
\newcommand{\hktvec}{\Big(h_k(t)\Big)_{k=0}^{n-1}}
\begin{document}

\baselineskip=17pt

\title[Rosenthal inequalities for quasi-Banach spaces]{Rosenthal's inequalities: $\Delta-$norms and quasi-Banach symmetric sequence spaces}

\author[]{Yong Jiao}
\address{School of Mathematics and Statistics, Central South University, Changsha 410075, People's Republic of China}
\email{jiaoyong@csu.edu.cn}

\author[]{Fedor Sukochev}
\address{School of Mathematics and Statistics, University of NSW, Sydney,  2052, Australia}
\email{f.sukochev@unsw.edu.au}

\author[]{Guangheng Xie}
\address{School of Mathematics and Statistics, Central South University, Changsha 410075, People's Republic of China}
\email{xieguangheng@csu.edu.cn}

\author[]{Dmitriy  Zanin}
\address{School of Mathematics and Statistics, University of NSW, Sydney,  2052, Australia}
\email{d.zanin@unsw.edu.au}

\thanks{Yong Jiao is supported by (No.11471337, No.11722114); Fedor Sukochev and Dmitriy Zanin are supported by the Australian Research Council.}

\begin{abstract}
Let $X$ be a symmetric quasi-Banach function space with Fatou property and let $E$ be an arbitrary symmetric quasi-Banach sequence space. Suppose that $(f_k)_{k\geq0}\subset X$ is a sequence of independent random variables. We present a necessary and sufficient condition on $X$ such that the quantity
$$\Big\|\ \Big\|\sum_{k=0}^nf_ke_k\Big\|_{E}\ \Big\|_X$$
admits an equivalent characterization in terms of disjoint copies of $(f_k)_{k=0}^n$ for every $n\ge 0$; in particular, we obtain the deterministic description of
$$\Big\|\ \Big\|\sum _{k=0}^nf_ke_k\Big\|_{\ell_q}\ \Big\|_{L_p}$$
for all $0<p,q<\infty,$ which is the ultimate form of Rosenthal's inequality. We also consider the case of a $\Delta$-normed symmetric function space $X$, defined via an Orlicz function $\Phi$ satisfying the $\Delta_2$-condition.
That is, we provide a formula for \lq\lq $E$-valued $\Phi$-moments\rq\rq, namely the quantity $\mathbb{E}\big(\Phi\big(\big\|(f_k)_{k\geq0} \big\|_E \big)\big)$, in terms of the sum of disjoint copies of $f_k, k\geq0.$
\end{abstract}

\subjclass[2010]{Primary: 46E30. Secondary: 60G50.}

\keywords{Independence, disjointification inequalities,  symmetric quasi-Banach sequence spaces, symmetric quasi-Banach function spaces}

\maketitle

\section{Introduction}

Let  $E$ be a symmetric quasi-Banach sequence space, let $(e_k)_{k=0}^\infty\subset E$ be the
standard basic sequence in $E$, that is $e_k:=(\underbrace{0,0,\dots,0}_{k-1},1,0,0,\dots)$  and let $X$ be a quasi-Banach symmetric function space or a symmetric $\Delta$-normed function space on
$[0,1]$ (for all unexplained terms here see Section
\ref{Preliminaries} below). For an arbitrary sequence of independent
functions $\{f_k\}_{k=0}^n\subset X$, $n\ge 0$, we consider the
quantity
\begin{equation}\label{main quantity}
\Big\|\  \Big\|\sum_{k=0}^nf_ke_k\Big\|_{E}\ \Big\|_X.
\end{equation}

The main objective of this paper is to find an equivalent deterministic estimate for this quantity in terms of disjoint copies of
functions from the given sequence.  We now briefly outline key results concerning the study of \eqref{main quantity} for special cases of $E$ and $X$ in the existing literature.

The origin of intensive studies concerning  quantity \eqref{main quantity} may be found in a famous paper of Rosenthal \cite{Rosenthal} (see also its detailed account in \cite[Theorem 6]{Astashkin2010}), which treated the case $E=\ell_1$ and $X=L_p,$ $2<p<\infty.$ Next, in the special case when $E$ is a symmetric (Banach) Orlicz sequence space $\ell_N$, $X=L_1(0,1)$ and when the sequence $(f_k)_{k=0}^n$ is a sequence of identically distributed random variables the study of \eqref{main quantity} was initiated by Gordon, Litvak, Sch\"utt and Werner \cite{Gordon} who proved that
\begin{equation}\label{GLSW}
\mathbb E(\|(a_kf_k)\|_{\ell_N})\approx_N \|(a_i)\|_{\ell_\Phi}
\end{equation}
where $\Phi$ is an Orlicz function depending on $N$ and the distribution function of $f_1$
(here $A\approx_N B$ means that the ratio $A/B$ is bounded below and above by constants which depend only on $N$). Moreover, the generalizations of \eqref{GLSW} to Musielak-Orlicz norms and tensor products of random variables were studied in \cite{ACPP} and \cite{APP}, respectively. Further, answering Yehoram Gordon's question whether a formula similar  to \eqref{GLSW} exists for arbitrary sequences of independent random variables and not just for  scalar multiples of independent identically distributed random variables, Montgomery-Smith \cite{Montgomery-Smith} produced a positive answer in the more general setting of symmetric Banach function and Orlicz sequence spaces.
Let $X$ be a symmetric Banach function space on $[0,1],$ let $E=\ell_N$ be an Orlicz sequence space and let $(f_k)_{k=0}^n\subset X,$ $n\ge 1$ be a sequence of independent random variables.
The notation $f=:\bigoplus_{k=0}^\infty f_k$ stands for a disjoint sum of random
variables $(f_k)$ considered as a measurable function on $(0,\infty).$ \cite[Theorem 1]{Montgomery-Smith}
asserts that if $L_q\subset X$ for some $1\leq q<\infty,$ then
\begin{equation}\label{m}
\big\|\|(f_k)_{k\geq0}\|_E\big\|_X\approx_{X,E} \big\|\mu(f)\chi_{(0,1)}\big\|_X+\big\|\big(\mu(k,f)\big)_{k\geq1}\big\|_E.
\end{equation}
Here, $\mu(f)$ denotes the decreasing rearrangement function of $f.$ The proof given in \cite{Montgomery-Smith} has been based on a deep and detailed development of Rosenthal's inequality due to Carothers and Dilworth \cite{CD1} who treated the setting when $X$ is a Lorentz space $L_{p,q},$ $1\leq p<\infty,$ $1\leq q\leq \infty,$ and especially to Johnson and Schechtman \cite{JS} who managed to thoroughly investigate the case when $X$ is an arbitrary quasi-Banach symmetric function space such that $L_q\subset X\subset L_p$ for some $0<q<p<\infty$ and $E=\ell_1.$  In all these works, a crucial component of the proof is an application of a well-known inequality by Hoffman-J\o rgensen \cite{HJ} or its versions. The fact that the Johnson and Schechtman's result \cite{JS} initially proved for $E=\ell_1$ can be easily extended to the setting when $E=\ell_p$,
$1\leq p<\infty$, was noted in \cite{Montgomery-Smith}, see also a detailed proof in \cite[Theorem 6.7]{AS2005}.

An important case, when $X=L_p, 1\leq p<\infty$ and $E=\ell_{1,\infty}$ (a quasi-Banach weak $\ell_1$-space) was treated by Junge in \cite{Junge}.  The approach based on Rosenthal and Johnson-Schechtman inequalities and their ramifications has culminated in \cite[Theorem 1]{Astashkin sequence} which provides necessary and sufficient conditions on the symmetric Banach space $X$ (in terms of the so-called Kruglov operator $K$ introduced in \cite{AS2005})  under which the estimates \eqref{m} hold. This result, however, was established under two rather restrictive conditions on the quasi-Banach symmetric sequence space $E$. Let us assume (without loss of generality) that $\|e_k\|_E=1,$ $k\ge 0.$ For every $k\in\mathbb{N},$ define a dilation operator $\sigma_k:l_{\infty}\to l_{\infty}$ as follows: if $(a_j)_{j\ge 0}$ is a bounded sequence, then
$$
 \sigma_k((a_j)_{j\ge 0}):=(\underbrace{a_0,\cdots,a_0}_{\mbox{$k$ times}},
 \underbrace{a_1,\cdots,a_1}_{\mbox{$k$ times}},\cdots, \underbrace{a_j,\cdots,a_j}_{\mbox{$k$ times}},\cdots).
$$
The assumptions on $E$ made in \cite{Astashkin sequence} are as follows:
$${\rm(i)} \quad \|\sigma_k\|_{E\to E}\leq k\mbox{ for all }k\ge 1\mbox{ and } {\rm (ii)} \quad \ell_1\subset E.$$
Observe that in the case when $E$ is a Banach symmetric sequence space the latter assumptions are satisfied automatically. However, in the important case $E=\ell_q$, $0<q<1$ both conditions above fail and the question whether equivalence \eqref{m} holds even for the case $X=L_p,\ 0<p<\infty$ remains unanswered.

One of the main objectives of this paper is to provide a complete answer to this question.
In fact, we shall present necessary and sufficient conditions on a quasi-Banach symmetric function space $X$
such that the deterministic equivalence \eqref{m} holds for every quasi-Banach symmetric sequence space $E$.
This result strengthens and complements all above mentioned papers \cite{Rosenthal, Gordon, CD1, JS, Montgomery-Smith, Junge, AS2005, Astashkin sequence,ACPP,pacific,APP}. We also mention that Astashkin and Tikhomirov \cite[Corollary 2]{Astashkin JUNGE} established that the inequality
$$\big\|\|(f_k)_{k\geq0}\|_E\big\|_X\gtrsim_{X,E}\big\|\big(\mu(k,f)\big)_{k\geq1}\big\|_E,\quad  f=\bigoplus_{k=0}^n f_k.$$
holds for all symmetric quasi-Banach spaces $X$ and $E.$
The converse inequality was not studied in \cite{Astashkin JUNGE} and it follows from the results of this paper,
that there are examples of separable symmetric spaces $X$ where the converse inequality fails.

We refer the reader for a more detailed account of all these developments to \cite{Astashkin2010} and just recapitulate our main point: the results of Johnson and Schechtman \cite{JS}, Astashkin and Sukochev \cite{{Astashkin sequence}, Astashkin2010} and Astashkin and Tikhomirov \cite{Astashkin JUNGE}
necessitated that the next chapter of the studies of the quantity \eqref{main quantity}
should be done for the case when $X$ and $E$ are not necessarily Banach but quasi-Banach or even $\Delta$-normed spaces.
The study of the case when $X$ is a $\Delta$-normed space, has been recently initiated in  \cite{JSXZ},
where the authors considered the problem of computing the $\Delta$-norm given by $\|f\|_{\Phi}=\mathbb{E}\big(\Phi(|f|)\big)$,
where $\Phi$ is an Orlicz function. In particular, \cite[Theorem 1.3]{JSXZ} asserts that
\begin{equation}\label{jsxz-classical}
\mathbb{E}\Big(\Phi(\sum_{k=0}^nf_k)\Big)\approx_{{\Phi}}\mathbb{E}\Big(\Phi\big(\mu(f)\chi_{[0,1]}\big)\Big)+
\Phi\big(\|f\|_1\big),\quad f=\bigoplus_{k=0}^n f_k,
\end{equation}
for an Orlicz function $\Phi$ satisfying the $\Delta_2$-condition and
for an arbitrary sequence $(f_k)_{k=0}^n\subset L_{\Phi}[0,1]$ ($n\in\mathbb{N}$) of non-negative independent random variables.

We are now ready to state our main results. See section 2 for the definition of Kruglov operator $K$ and any other unexplained notations.

\begin{theorem}\label{main-new} Let $X$ be a quasi-Banach symmetric function space with Fatou property. The following conditions are equivalent
\begin{enumerate}[{\rm (i)}]
\item\label{mainnewa} for every symmetric quasi-Banach sequence space $E$ and for an arbitrary sequence of independent random variables $(f_k)_{k\geq0}\subset X,$ we have
\begin{equation}\label{main}
\big\|\|(f_k)_{k\geq0}\|_E\big\|_X\approx_{X,E} \big\|\mu(f)\chi_{(0,1)}\big\|_X+\big\|\big(\mu(k,f)\big)_{k\geq1}\big\|_E,\quad f=\bigoplus_{k\geq0}f_k.
\end{equation}
\item\label{mainnewb} for every $0<p<1$ we have $K:X_p\to X_p,$ where $X_p$ is the $p$-th power of $X.$
\end{enumerate}
\end{theorem}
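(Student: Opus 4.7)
The approach is to prove the two implications separately. The substantive direction is (\ref{mainnewb})$\Rightarrow$(\ref{mainnewa}), which I would attack via a $p$-convexification that reduces the problem to the Banach-space setting, in which the main theorem of Astashkin and Sukochev \cite{Astashkin sequence} becomes applicable, and then lift the resulting equivalence back using the $p$-th power identities $\|h^p\|_{X_p}=\|h\|_X^p$ and $\|(a_k^p)\|_{E_p}=\|(a_k)\|_E^p$.

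For the forward direction, fix a quasi-Banach symmetric sequence space $E$ and an independent sequence $(f_k)_{k\geq0}\subset X$. Using the Aoki--Rolewicz theorem, select $p\in(0,1)$ small enough that both $X_p$ and $E_p$ are Banach spaces. By hypothesis (\ref{mainnewb}), $K$ is bounded on $X_p$, so $X_p$ has the Kruglov property; since $E_p$ is a Banach symmetric sequence space, the two structural assumptions of \cite{Astashkin sequence} on the sequence space, namely $\|\sigma_k\|_{E_p\to E_p}\leq k$ and $\ell_1\subset E_p$, hold automatically. Applying \cite[Theorem~1]{Astashkin sequence} to the independent sequence $(|f_k|^p)\subset X_p$ together with the space $E_p$, and using the pointwise identities $\bigoplus_k|f_k|^p=|f|^p$ for $f=\bigoplus_k f_k$ together with $\mu(|f|^p)=\mu(f)^p$, one obtains
\[
\bigl\|\,\|(|f_k|^p)\|_{E_p}\bigr\|_{X_p}\approx \bigl\|\mu(f)^p\chi_{(0,1)}\bigr\|_{X_p}+\bigl\|(\mu(k,f)^p)_{k\geq1}\bigr\|_{E_p}.
\]
The $p$-th power identities rewrite both sides as $p$-th powers of the corresponding quantities in (\ref{main}); the elementary comparison $(a^p+b^p)^{1/p}\approx_p a+b$ for $a,b\geq0$ then delivers (\ref{main}).

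For the reverse direction (\ref{mainnewa})$\Rightarrow$(\ref{mainnewb}), fix $p\in(0,1)$ and apply (\ref{main}) with the specific choice $E=\ell_p$; the same power identities reverse the transformation and recast the inequality as the Rosenthal-type inequality for $X_p$ with the sequence space $\ell_1$ applied to the independent sequence $(|f_k|^p)\subset X_p$. The converse implication in the Banach-case characterization of \cite{Astashkin sequence} (after a further convexification if $X_p$ itself is not yet Banach) then yields $K:X_p\to X_p$. The main obstacle lies in the forward direction and concerns the careful propagation of constants under $p$-convexification: one must check that the Astashkin--Sukochev constants for the Banach pair $(X_p,E_p)$ translate into constants for the quasi-Banach pair $(X,E)$ depending only on the quasi-Banach data of $X$ and $E$, and not on the particular sequence $(f_k)$. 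The Fatou property of $X$ enters precisely at the final step of the forward direction, permitting the passage from finite index sets (as treated in \cite{Astashkin sequence}) to the infinite sequences $(f_k)_{k\geq0}$ appearing in (\ref{main}).
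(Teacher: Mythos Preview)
Your forward direction (\ref{mainnewb})$\Rightarrow$(\ref{mainnewa}) is correct but takes a genuinely different route from the paper. The paper does not reduce to \cite{Astashkin sequence}; instead it argues directly: it splits $f_k=g_k+h_k$ into a ``head'' part (where $|f_k|>\mu(1,f)$) and a ``tail'' part, bounds the head by choosing $r$ with $\ell_r\subset E$ and invoking the Johnson--Schechtman inequality in $X_r$ (valid since $K:X_r\to X_r$), and bounds the tail via Junge's combinatorial estimate (Lemma~\ref{lem-junge}) combined with an embedding of $X$ into a Marcinkiewicz space built from $K\chi_{(0,1)}$. Your $p$-convexification shortcut is slicker and makes transparent why boundedness of $K$ on \emph{all} powers $X_p$ is the right hypothesis: the exponent $p$ must be adapted to the Aoki--Rolewicz index of $E$. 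What the paper's approach buys is self-containment and explicit control of constants without black-boxing \cite{Astashkin sequence}.

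There is, however, a real gap in your reverse direction and in your placement of the Fatou property. For (\ref{mainnewa})$\Rightarrow$(\ref{mainnewb}) you correctly specialize to $E=\ell_p$ and rewrite the resulting inequality as the Johnson--Schechtman inequality in $X_p$; this is exactly what the paper does. But you then propose to close with the converse direction of \cite{Astashkin sequence} ``after a further convexification if $X_p$ itself is not yet Banach.'' That does not work: a further convexification to a Banach $X_{pq}$ with $q<1$ would require applying (\ref{mainnewa}) with $E=\ell_{pq}$ and would only deliver $K:X_{pq}\to X_{pq}$, not $K:X_p\to X_p$. Since (\ref{mainnewb}) demands boundedness on $X_p$ for \emph{every} $p\in(0,1)$, and for $p$ close to $1$ the space $X_p$ need not be Banach, the Banach-case converse of \cite{Astashkin sequence} is insufficient. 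The paper instead invokes Lemma~\ref{lem-pjm} (i.e.\ \cite[Theorem~7]{pacific}), which is stated precisely for quasi-Banach symmetric spaces with the Fatou property and yields $K:X_p\to X_p$ directly. This is also where the Fatou hypothesis is actually used --- in (\ref{mainnewa})$\Rightarrow$(\ref{mainnewb}), not in the passage to infinite sequences in the forward direction as you suggest; the paper states explicitly that the implication (\ref{mainnewb})$\Rightarrow$(\ref{mainnewa}) holds without Fatou.
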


The condition \eqref{mainnewb} above is in sharp contrast with the results of \cite{Astashkin sequence}, where only condition $K:X\to X$ is used.

Note that the Fatou property of the space $X$ is only used when we derive \eqref{mainnewb} from \eqref{mainnewa}. In this direction, the argument is relatively short (see the very end of the proof of Theorem \ref{main-new}). The proof of the reverse implication (which occupies the bulk of the proof) perfectly works for every quasi-Banach space $X$ without additional assumptions (e.g. without Fatou property).

We immediately deduce the following ultimate form of Rosenthal's inequality
and nicely finalize the story started in \cite{Rosenthal}.

\begin{corollary}\label{lplq corollary}
Let $(f_k)_{k\geq0}\subset L_p(0,1)$ be an arbitrary sequence of independent random variables.
\begin{enumerate}[{\rm (1)}]
\item For all $0<p\leq q<\infty,$ we have
$$\Big\|\ \Big\|\sum _{k\geq0}f_ke_k\Big\|_{\ell_q}\ \Big\|_{L_p}\approx_{p,q} \big\|f\big\|_{L_p+L_q},\quad f=\bigoplus_{k\geq0}f_k.$$
\item For all $0<q\leq p<\infty,$ we have
$$\Big\|\ \Big\|\sum _{k\geq0}f_ke_k\Big\|_{\ell_q}\ \Big\|_{L_p}\approx_{p,q} \big\|f\big\|_{L_p\cap L_q},\quad f=\bigoplus_{k\geq0}f_k.$$
\end{enumerate}
\end{corollary}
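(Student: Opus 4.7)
The plan is to derive the corollary as a direct application of Theorem \ref{main-new} with $X=L_p$ and $E=\ell_q$, followed by a routine identification of the resulting deterministic expression via classical real-interpolation formulas.

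First, I will verify the hypotheses of Theorem \ref{main-new}. The space $L_p[0,1]$ has the Fatou property, and for $r\in(0,1)$ its $r$-th power $(L_p)_r$ coincides (up to an equivalent quasi-norm) with $L_{p/r}$. Condition \eqref{mainnewb} therefore reduces to the assertion that $K\colon L_s\to L_s$ for every $s\in(p,\infty)$, which is a classical fact about the Kruglov operator (see \cite{AS2005,Astashkin2010}): $K$ is bounded on every $L_s$ with $0<s<\infty$.

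With hypothesis \eqref{mainnewb} verified, Theorem \ref{main-new} yields
\begin{equation*}
\Big\|\ \Big\|\sum_{k\geq 0} f_k e_k\Big\|_{\ell_q}\ \Big\|_{L_p} \approx_{p,q} \|\mu(f)\chi_{(0,1)}\|_{L_p} + \|(\mu(k,f))_{k\geq 1}\|_{\ell_q}.
\end{equation*}
Since $\mu(f)$ is decreasing, a standard integral-sum comparison gives $\|(\mu(k,f))_{k\geq 1}\|_{\ell_q}\approx_q \|\mu(f)\chi_{(1,\infty)}\|_{L_q}$, so the remaining task is to identify $R(f):=\|\mu(f)\chi_{(0,1)}\|_{L_p}+\|\mu(f)\chi_{(1,\infty)}\|_{L_q}$ as either $\|f\|_{L_p+L_q}$ or $\|f\|_{L_p\cap L_q}$, according to whether $p\le q$ or $q\le p$.

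In case (1), with $p\le q$, the equivalence $R(f)\approx\|f\|_{L_p+L_q}$ is precisely Holmstedt's formula for the $K$-functional of the couple $(L_p,L_q)$ evaluated at $t=1$. In case (2), with $q\le p$, the direct bound $\|f\|_{L_p\cap L_q}\lesssim R(f)$ will follow from the monotonicity of $\mu(f)$: Jensen's inequality on $(0,1)$ gives $\|\mu(f)\chi_{(0,1)}\|_{L_q}\lesssim \|\mu(f)\chi_{(0,1)}\|_{L_p}$, while on $(1,\infty)$ the pointwise bound $\mu(f)(t)\le \mu(f)(1)\le\|\mu(f)\chi_{(0,1)}\|_{L_p}$ converts the $L_p$-tail to the $L_q$-tail via $\int_1^\infty \mu(f)^p\le \mu(f)(1)^{p-q}\int_1^\infty \mu(f)^q$; the reverse estimate $R(f)\lesssim\|f\|_{L_p\cap L_q}$ is trivial from restriction. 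Overall, the argument is essentially bookkeeping once Theorem \ref{main-new} is applied, and I do not anticipate serious obstacles beyond the classical verification of hypothesis \eqref{mainnewb}.
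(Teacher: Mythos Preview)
Your proposal is correct and follows essentially the same route as the paper: verify that $K$ is bounded on every $L_s$ (so that condition \eqref{mainnewb} holds for $X=L_p$), apply Theorem \ref{main-new}, and then identify the deterministic expression with the $L_p+L_q$ or $L_p\cap L_q$ norm. The paper simply asserts the final identification without detail, whereas you spell it out via Holmstedt's formula and elementary tail estimates; this is entirely in the same spirit.
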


We have the following $\Delta$-normed estimates extending \eqref{jsxz-classical} to an arbitrary symmetric quasi-Banach sequence space.
\begin{theorem}\label{main modular thm}
Let $\Phi\in\Delta_2$ be an Orlicz function. Let $E$ be a quasi-Banach symmetric sequence space. For every $n\in\mathbb{N}$ and for every sequence $(f_k)_{k=0}^{n-1} \subset L_{\Phi}(0,1)$ of independent random variables and $f=\bigoplus_{k=0}^{n-1}f_k$, we have
\begin{equation}\label{first main}
\int_0^1\Phi\Big(\Big\|\fktvec\Big\|_E\Big)dt\approx_{E,\Phi}\,\int_0^1\Phi(\mu(t,f))dt
+\Phi\Big(\Big\|\Big(\mu(k,f)\Big)_{k=1}^n\Big\|_E\Big).
\end{equation}
\end{theorem}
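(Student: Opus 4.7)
The plan is to adapt the proof of Theorem \ref{main-new} to the level of the $\Phi$-modular $\rho_\Phi(g):=\int_0^1\Phi(|g|)\,dt$, using the $\Delta_2$ quasi-subadditivity $\Phi(a+b)\le C_\Phi(\Phi(a)+\Phi(b))$ in place of a quasi-triangle inequality. I would not try to reduce to Theorem \ref{main-new} as a black box, since for $\Phi\in\Delta_2$ one generally has $\Phi(\|g\|_{L_\Phi})\not\approx\rho_\Phi(g)$: the ratio depends on how concentrated $g$ is (e.g.\ for $\Phi(t)=t+t^2$ one can make the ratio arbitrarily large), so the modular statement has genuinely independent content.

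For the lower bound I would show
\[
\rho_\Phi\big(\|(f_k)\|_E\big)\;\gtrsim_{E,\Phi}\;\rho_\Phi\big(\mu(f)\chi_{(0,1)}\big)+\Phi\big(\|(\mu(k,f))_{k\ge1}\|_E\big).
\]
The first term is handled using $\|(f_k(t))\|_E\ge\|e_0\|_E\cdot\max_k|f_k(t)|$ and the fact that the decreasing rearrangement of $\max_k|f_k|$ is comparable to $\mu(f)\chi_{(0,1)}$ (since $1-\prod_k(1-p_k)\approx\min(\sum_k p_k,1)$ with $p_k=\mathbb{P}(|f_k|>\lambda)$). The second term transfers the Astashkin--Tikhomirov estimate \cite[Corollary 2]{Astashkin JUNGE} to the modular setting: that estimate is obtained via a pointwise lower bound on a set of positive measure, so applying $\Phi$ and integrating preserves it.

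For the upper bound I would decompose $f_k=f_k^{(1)}+f_k^{(2)}$ with $f_k^{(1)}:=f_k\chi_{\{|f_k|>c\}}$, the threshold $c$ chosen so that $\sum_k\mathbb{P}(|f_k|>c)\le 1$. Quasi-subadditivity of $\Phi$ then splits
\[
\rho_\Phi(\|(f_k)\|_E)\;\le\; C_\Phi\,\rho_\Phi(\|(f_k^{(1)})\|_E)+C_\Phi\,\rho_\Phi(\|(f_k^{(2)})\|_E).
\]
The peak parts $(f_k^{(1)})$ have almost-disjoint supports, so an independence-plus-distributional argument reduces $\rho_\Phi(\|(f_k^{(1)})\|_E)$ to $\Phi(\|(\mu(k,f))_{k\ge1}\|_E)$. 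For the bounded parts $(f_k^{(2)})$ I would invoke a modular version of the Kruglov-operator bound: since $\Phi\in\Delta_2$ is polynomially majorized, Braverman's criterion gives boundedness of $K$ on $L_{\Phi_p}$ for $\Phi_p(t):=\Phi(t^{1/p})$ and every $0<p<1$, and the Kruglov step used in the proof of Theorem \ref{main-new} can then be re-executed modularly to yield $\rho_\Phi(\|(f_k^{(2)})\|_E)\lesssim_\Phi\rho_\Phi(\mu(f)\chi_{(0,1)})$.

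The main obstacle is the non-homogeneity of $\rho_\Phi$: one cannot normalize by scaling $f$, so every estimate must be done directly at the modular level with the $\Delta_2$-constant tracked at each step. The most delicate point is matching the threshold $c$ in the decomposition so that the peak-part estimate and the Kruglov step combine cleanly into the two-term right-hand side of \eqref{first main}; this is a modular analogue of the balance performed in the proof of Theorem \ref{main-new}, and it is precisely here that the $\Delta_2$-condition on all the $\Phi_p$, $0<p<1$ (mirroring the hypothesis $K\colon X_p\to X_p$ in Theorem \ref{main-new}(ii)), becomes essential.
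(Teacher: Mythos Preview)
Your lower-bound strategy is sound and matches the paper's (Lemma \ref{distribution} for the first term, Lemmas \ref{astashkin theorem1} and \ref{tail lower estimate} for the second). The upper bound, however, has the roles of the two pieces reversed, and as stated both of your claimed inequalities are false.

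For the peak parts $f_k^{(1)}$ (those with $\sum_k m({\rm supp}\,f_k^{(1)})\le1$) you claim $\rho_\Phi(\|(f_k^{(1)})\|_E)\lesssim\Phi(\|(\mu(k,f))_{k\ge1}\|_E)$. But the disjoint sum of the $f_k^{(1)}$ is $\mu(f)\chi_{(0,1)}$, not the tail sequence. Take $n=2$ and $f_0,f_1$ independent copies of $M\chi_{(0,1/2)}$: then $\mu(1,f)=0$, so $f_k^{(1)}=f_k$, and $\rho_\Phi(\|(f_k^{(1)})\|_E)\ge\tfrac12\Phi(M)$, whereas $(\mu(k,f))_{k\ge1}=0$. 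In the paper the peak parts are controlled by $\int_0^1\Phi(\mu(t,f))\,dt$ via a direct modular Kruglov bound (Lemma \ref{head upper lemma}, resting on Lemmas \ref{K Psi estimate} and \ref{15 lemma}); no appeal to Braverman's criterion or to norm-boundedness of $K$ on $L_{\Phi_p}$ is needed.

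For the bounded parts $f_k^{(2)}$ you propose a Kruglov-type step to get $\rho_\Phi(\|(f_k^{(2)})\|_E)\lesssim\int_0^1\Phi(\mu(t,f))\,dt$. This also fails: with $E=\ell_1$, $\Phi(t)=t$, and $f_k\equiv1$ for $0\le k<n$, one has $f_k^{(2)}=f_k$, $\rho_\Phi(\|(f_k^{(2)})\|_E)=n$, but $\int_0^1\Phi(\mu(t,f))\,dt=1$. The Kruglov bound needs small total support, which the $f_k^{(2)}$ do not have. The paper instead bounds the tail parts by $\Phi(\|(\mu(k,f))_{k\ge1}\|_E)$ using an entirely different mechanism: the $L_p$-estimate of Lemma \ref{vector p-estimate} (via Junge's doubly-stochastic Lemma \ref{lem-junge}) is applied at the two fixed exponents $p=1$ and $p=q$, where $q$ is the concavity index of $\Phi$, and these two scalar bounds are combined through the integral representation $\Phi(u)\approx\alpha u+\beta u^q+\int_0^\infty\min\{(su)^q,su\}\,d\nu(s)$ available for any $1$-convex $q$-concave Orlicz function (Lemma \ref{tail upper lemma}).

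So your decomposition and choice of threshold are correct, but the two target terms must be swapped, and the bounded-tail estimate does not come from Kruglov at all; it requires the Junge combinatorial machinery, which your proposal does not invoke.
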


The $\Delta_2-$condition in Theorem \ref{main modular thm} is unavoidable as it guarantees the linearity of the modular space. On the other hand, this condition plays a role similar to the boundedness of the Kruglov operator in Theorem \ref{main-new}. This manifests how Rosenthal inequalities in the setting of modular spaces differ from those in the setting of quasi-Banach symmetric spaces. Theorem \ref{main modular thm} nicely finalizes the story initiated by Gordon, Litvak, Sch\"utt and Werner \cite{Gordon}.


Our proof of Theorem \ref{main modular thm} borrows some ideas from \cite{Junge} and \cite{Jiao Sukochev and Zanin}. It should also be pointed out that we do not assume conditions (i) and (ii) as in \cite{{Astashkin sequence}, Astashkin2010} (see above), which shows substantial difference between our current approach and the one in \cite{AS2005, {Astashkin sequence}, Astashkin2010}.

The rest of the paper is organized as follows. In Section \ref{Preliminaries},
we recall some notions and notation on symmetric quasi-Banach function/sequence spaces,
Orlicz functions and Kruglov operators. Then we present some known results that will be used to prove Theorem \ref{main-new}. In Section \ref{pf of thm1}, we present the proofs of Theorem \ref{main-new} and Corollary \ref{lplq corollary}. Finally, in the last section, we prove Theorem \ref{main modular thm}.

\section{Preliminaries}\label{Preliminaries}

In this section, we provide some preliminaries which are necessary for the whole paper.
\subsection{Symmetric quasi-Banach function/sequence spaces}

For a measurable function $f$ on $(0,1)$ or on $(0,\infty)$ (equipped with the Lebesgue measure $m$), we define a distribution function by setting
$$d_f(s):=m(\{t:f(t)>s\}),\quad s\in\mathbb{R}.$$

Let $S(0,1)$ denote the space of all Lebesgue measurable functions on $(0,1).$ Respectively,
$$S(0,\infty)=\Big\{f:(0,\infty)\to\mathbb{R}:\ f\mbox{ is measurable and }d_{|f|}(s)<\infty\mbox{ for some }s>0\Big\}.$$
For every $f\in S(0,1)$ (or $f\in S(0,\infty)$), its decreasing rearrangement $\mu(f)$ (strictly speaking, this is the decreasing rearrangement of $|f|$) is defined by the formula
$$\mu(t,f):=\inf\{\lambda\geq0:d_{|f|}(\lambda)<t\},\quad t>0.$$

A linear space over $\mathbb{C}$ is called quasi-normed if it is equipped with the functional $\|\cdot\|_E:E\to\mathbb{R}_+$ such that
\begin{enumerate}[{\rm (1)}]
\item $\|x\|_E=0$ if and only if $x=0.$
\item for every $x\in E$ and for every $\alpha\in\mathbb{C}$ we have $\|\alpha x\|_E=|\alpha|\|x\|_E$
\item for every $x,y\in E,$ we have
$\|x+y\|_E\leq C_E(\|x\|_E+\|y\|_E),$ where $C_E$ is a positive constant depending only on $E.$
\end{enumerate}
Convergent sequences and Cauchy sequences are defined exactly as in the normed case. A quasi-normed space is called quasi-Banach if every Cauchy sequence converges. The constant $C_E$ is called the concavity modulus of the space $E.$

Let $(E,\|\cdot\|_E)$ be a quasi-Banach space. $E$ is said to be a quasi-Banach function space on $(0,1)$ if $E\subset S(0,1),$ that is, $E$ consists of measurable functions on $(0,1).$ A quasi-Banach function space $(E,\|\cdot\|_E)$ is called a quasi-Banach lattice if, from $f\in E,$ $g\in S(0,1)$ and $|g|\leq|f|$, it follows that $g\in E $ and $\|g\|_E \leq\|f\|_E.$ A quasi-Banach lattice $E$ is said to be symmetric quasi-Banach function space if, for every $f\in E$ and for every measurable function $g$, the assumption $\mu(g)=\mu(f)$ implies that $g\in E$ and $\|g\|_E=\|f\|_E.$  The latter notion admits a natural extension to symmetric $\Delta$-normed function spaces whose definition we now recall.
Let $\Omega$ be a linear space over the field $\mathbb{C}$.
A function $\|\cdot\|$ from $\Omega$ to $\mathbb{R}$ is a $\Delta$-norm, if for all $x,y \in \Omega$ the following properties hold:
\begin{enumerate}[{\rm (1)}]
\item $\|x\| \geqslant 0$; $\|x\| = 0 \Leftrightarrow x=0$;
\item $\|\alpha x\| \leqslant \|x\|$ for all $|\alpha| \le1$;
\item $\lim _{\alpha \rightarrow 0}\|\alpha x\| = 0$;
\item there exists a constant $C_\Omega\geq 1$ such that
$$\|x+y\| \le C_\Omega \cdot (\|x\|+\|y\|),\quad x,y\in\Omega.$$
\end{enumerate}

The couple $(\Omega, \|\cdot\|)$ is called a $\Delta$-normed space.
We note that the definition of a $\Delta$-norm given above is the same as the one given in \cite{Kalton1985}.
It is well-known that every $\Delta$-normed space $(\Omega,\|\cdot\|)$ is metrizable and conversely every (translation invariant) metrizable space can be equipped with a $\Delta$-norm (see e.g.\cite{Kalton1985}).
Note that properties $(2)$ and $(4)$ of a $\Delta$-norm imply that for any $\alpha\in\mathbb{C}$,
there exists a positive constant $M$ such that $\|\alpha x\|\leq M\|x\|,\, x\in \Omega$, in particular,
if $\|x_n\|\to 0, (x_n)_{n=1}^\infty\subset \Omega$, then $\|\alpha x_n\|\to 0$. If $(\Omega, \|\cdot\|)$ a $\Delta$-normed space of functions (say on $(0,1)$) such that $f\in \Omega$ and $g\in S(0,1)$ with $\mu(f)\ge \mu(g)$ imply $g\in \Omega$ and $\|f\|\ge \|g\|$, then we call $(\Omega, \|\cdot\|)$ a symmetric $\Delta$-normed space.

We say that a symmetric quasi-Banach function space $E$ satisfies the Fatou property if, for every bounded sequence $(x_n)_{n\geq0}\subset E,$ the convergence $x_n\to x$ almost everywhere implies that $x\in E$ and
$$\|x\|_E\leq\liminf_{n\to\infty}\|x_n\|_E.$$

Given $0<p<\infty,$ the $p$-th power of the quasi-Banach symmetric space $X$ is defined by setting
$$X_p:=\{f\in S(0,1):|f|^{\frac{1}{p}}\in X\}$$
and
$$\|f\|_{X_p}:=\||f|^{\frac{1}{p}}\|^p_X.$$
Using that $\mu(|f|^p)=\mu(f)^p$ for any $f\in S(0,1),$ one can see that $X_p$ is symmetric if $X$ is symmetric. It is also a simple fact that if $X$ satisfies the Fatou property (in the sense of \cite[page 260]{pacific}), then $X_p$ also satisfies the Fatou property for every $0<p<\infty.$ We refer to \cite[Chapter 2]{OR} and references therein for more details on $p$-th power of a quasi-Banach function space.

If $\xi=(\xi_n)_{n=0}^{\infty}$ is a bounded sequence of real numbers, then its \emph{distribution
function} $d_{\xi}$ is defined by setting, for any $t\in\mathbb{R}$,
$$d_{\xi}(t):={\rm Card}(\{n\geq0:\ \xi_n>t\}),$$
where for every $A\subset\mathbb{Z}_+,$ ${\rm Card}(A)$ is the cardinality of $A.$ Then for any $n\geq0$,
$$\mu(n,\xi)=\inf\{\lambda\geq0:\ d_{|\xi|}(\lambda)\le n\}.$$
Then for two sequences of nonnegative numbers $a:=(a_n)_{n=0}^{\infty}$ and $b:=(b_n)_{n=0}^{\infty},$ $d_a\le d_b$ implies that $\mu(a)\le \mu(b).$

A quasi-Banach sequence space $E$ is said to be symmetric if from the assumptions $a\in E$ and $\mu(b)\leq\mu(a)$ it follows that $b\in E $ and $\|b\|_E\leq \|a\|_E.$ Without loss of generality we will assume throughout that $\|e_k\|_E=1$ $(k=0,1,2,\ldots),$ where the vectors $e_k$ are the vectors of the standard basis in sequence spaces.

Let $f_k$, $k\geq0$, be elements from $S(0,1)$ and let $g_k\in S(0,\infty)$, $k\geq0$, be their disjoint copies; that is, $f_k$ and $g_k$ are identically distributed random variables for all $k\geq0$, and $g_lg_m=0$ if $l\neq m$. For example, we can set $g_k(t)=f_k(t-k)\chi_{[k,k+1)}(t),$ $t>0.$ For the function $\sum_{k\geq0}g_k$, which is frequently called the disjoint sum of $f_k,k\geq0$, we shall use the suggestive notation $\bigoplus_{k\geq0}f_k$. It is important to observe that the distribution function of a disjoint sum $\bigoplus_{k\geq0}f_k$
does not depend on the particular choice of elements $g_k,$ $k\geq0.$ Note the obvious equality
$$d_{\bigoplus_{k=0}^n f_k}=\sum_{k=0}^nd_{f_k}.$$
In the special case when $\sum_{k=0}^n\mathbb{P}({\rm supp}(f_k))\leq1,$ it is convenient to view the sum $\bigoplus_{k=0}^nf_k$ as a measurable function on $(0,1)$.

We recall that the dilation operator $\sigma_s:S(0,1)\rightarrow S(0,1),$ $s\in(0,1)$ is given by $(\sigma_sx)(t)=x(\frac{t}{s})$ if $t\in(0,s);$ otherwise $(\sigma_sx)(t)=0.$

\subsection{Orlicz functions and Kruglov operators}

Let $\Phi$ be an Orlicz function on $[0,\infty)$, i.e. a continuous
increasing and convex function satisfying $\Phi(0)=0$ and $\lim_{t\rightarrow\infty}\Phi(t)=\infty$.
Recall that an Orlicz function $\Phi$ on $[0,\infty)$ satisfies the $\Delta_2$-condition if there is a positive constant $C$ such that  $\Phi (2t) \leq C\Phi (t) $ for all $t>0$. In this case, we write $\Phi\in\Delta_2$. Let $1\leq p\leq q\leq\infty,$ an Orlicz function $\Phi$ is said to be $p$-convex if the function $t\rightarrow\Phi(t^{\frac{1}{p}}),$ $t>0$ is convex, and $\Phi$ is said to be $q$-concave if the function $t\rightarrow\Phi(t^{\frac{1}{q}}),$ $t>0$ is concave. An Orlicz function $\Phi\in\Delta_2$ if and only if it is equivalent to a $q$-concave Orlicz function for some $q<\infty,$ (see, for instance, \cite[Lemma 5]{AS2014JMAA}).

By $L_{\Phi}$ (the Orlicz function space associated with $\Phi$) we denote the class of all measurable functions $f$ on $(0,1)$ (or on $(0,\infty)$) such that the norm
$$\|f\|_{L_{\Phi}}=\inf\Big\{\lambda>0:\int_0^{\infty}\Phi\Big(\frac{|f(t)|}{\lambda}\Big)dt\leq1\Big\}$$
is finite. It is well known that $L_\Phi$ is a symmetric function space. On the other hand, the space $L_{\Phi}$ equipped with modular $\|f\|_{\Phi}=\int_0^{\infty}\Phi({|f(t)|})dt$ is an example of a $\Delta$-normed space, provided $\Phi\in\Delta_2$ (see \cite[pp.28-29]{Kalton1985}).

Before introducing the definition of the Kruglov operator originated in \cite{AS2005} (see also \cite{pacific}), we consider the probability product space
$$(\Omega,\mathbb{P}):=\prod_{k=0}^{\infty}((0,1),\mathbb{P}_k),$$
($\mathbb P_k$ is the Lebesgue measure on $(0,1),k\geq0$). Observe that in an arbitrary symmetric space, the norms of any two elements with identical distribution coincide. Hence, using a bijective measure-preserving transformation between measure space $(\Omega,\mathbb{P})$ and $((0,1),\mathbb P)$, we  identify an arbitrary measurable function $f(\omega)=f(\omega_0,\omega_1,\cdots,\omega_n\cdots)$ on $(\Omega,\mathbb{P})$ with the corresponding element from $S(0,1)$. A particular form of the measure-preserving transformation used in such identification does not play any role and we completely suppress it from the notations. Thus, we  view the set $\Omega$ as $(0,1)$ and any measurable function on $(\Omega,\mathbb{P})$ as a function from $S(0,1)$.

Now, we are ready to introduce the notation of the Kruglov operator. Let $(A_n)_{n=0}^{\infty}$ be a fixed sequence of mutually disjoint measurable subsets of $(0,1)$ such that $\mathbb P(A_n)=\frac1{e\cdot n!}.$ Define the operator $K:S(0,1)\rightarrow S(0,1)$
by setting
\begin{equation}\label{K def}
Kf(\omega):=\sum_{n=1}^{\infty}\sum_{k=1}^n f(\omega _k)\chi_{A_n}(\omega_0).
\end{equation}

We end this section by introducing two useful lemmas which will be needed for the proof of Theorem \ref{main-new}. The following lemma was proved in \cite[Theorem 7]{pacific}.

\begin{lemma}\label{lem-pjm}
Let $X$ and $Y$ be quasi-Banach symmetric spaces on $(0,1)$ and let $Y$
have the Fatou property. Suppose that there exists a positive constant $C$ such
that for every sequence of nonnegative independent random variables $(f_k)_{k=1}^n\subset X$, $n\in\mathbb N$,
with $\sum_{k=1}^{n}m({\rm supp}(f_k))\leq 1,$ we have
$$\left\|\sum_{k=1}^nf_k\right\|_Y\le C \left\|\bigoplus_{k=1}^{n}f_k\right\|_X.$$
Then the operator $K$ maps $X$ into $Y$ and $\|K\|_{X\to Y}\le C$.

The assertion remains valid under the assumption that the above inequality holds
for $X=Y$, where $X$ is a separable quasi-Banach symmetric space.
\end{lemma}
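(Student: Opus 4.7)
The plan is to realize $Kf$ as the distributional limit of sums of independent nonnegative random variables whose disjoint sums are controlled by $f$, and then invoke the Fatou property of $Y$ to transfer the uniform bound to $Kf$.

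First, since $|Kf|\leq K|f|$ pointwise on the underlying product space, I would reduce to the case $f\geq 0$. Next, for each integer $M\geq 1$, on an auxiliary product probability space identified with $(0,1)$ by a measure-preserving bijection, I would introduce independent Bernoulli variables $B_1^{(M)},\dots,B_M^{(M)}$ with $\mathbb{P}(B_n^{(M)}=1)=1/M$, independent i.i.d.\ copies $F_1,\dots,F_M$ of $f$ (all jointly independent), and set $g_n^{(M)}:=B_n^{(M)}F_n$. These are nonnegative and independent, each with $m(\mathrm{supp}(g_n^{(M)}))\leq 1/M$, so $\sum_{n=1}^M m(\mathrm{supp}(g_n^{(M)}))\leq 1$ and the hypothesis yields
\begin{equation*}
\Big\|\sum_{n=1}^M g_n^{(M)}\Big\|_Y \leq C\,\Big\|\bigoplus_{n=1}^M g_n^{(M)}\Big\|_X.
\end{equation*}
A one-line calculation gives $d_{g_n^{(M)}}(\lambda)=(1/M)\,d_f(\lambda)$ for $\lambda>0$, hence $d_{\bigoplus_n g_n^{(M)}}=\sum_n d_{g_n^{(M)}}=d_f$; by symmetry of $X$ this gives $\|\bigoplus_n g_n^{(M)}\|_X=\|f\|_X$, so $\|\sum_n g_n^{(M)}\|_Y\leq C\|f\|_X$ uniformly in $M$.

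The characteristic function of $\sum_n g_n^{(M)}$ equals $\bigl(1+M^{-1}(\phi_f(t)-1)\bigr)^M$, which converges to $\exp(\phi_f(t)-1)$, the characteristic function of the compound Poisson variable with rate $1$ and step distribution $f$. A direct computation from the definition of $K$ together with $\mathbb{P}(A_n)=1/(e\cdot n!)$ shows that $Kf$ has exactly the same characteristic function, so $\sum_n g_n^{(M)}\Rightarrow Kf$ in distribution. Consequently $\mu(\sum_n g_n^{(M)})(t)\to\mu(Kf)(t)$ at every continuity point of $\mu(Kf)$, hence almost everywhere on $(0,1)$. Applying the Fatou property of $Y$ to the uniformly bounded sequence $\mu(\sum_n g_n^{(M)})$ and using symmetry of $Y$, I would conclude $\|Kf\|_Y=\|\mu(Kf)\|_Y\leq \liminf_M\|\sum_n g_n^{(M)}\|_Y\leq C\|f\|_X$.

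The main obstacle I foresee is the transition from distributional convergence of $\sum_n g_n^{(M)}$ to a genuine norm bound on $Kf$; the Fatou property is precisely what bridges this gap, working at the level of decreasing rearrangements. For the alternative statement in which $X=Y$ is separable in lieu of Fatou, I would replace the limiting argument by a direct construction on a single probability space carrying independent variables $(g_n)_{n\geq 1}$ with finely tuned parameters $p_n$ together with a truncation $\sum_{n\leq N}g_n\to Kf$, using separability (which ensures the norm behaves well under such monotone approximations) and the quasi-triangle inequality with concavity modulus $C_X$ to pass to the limit.
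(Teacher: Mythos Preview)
The paper does not give its own proof of this lemma; it merely cites \cite[Theorem~7]{pacific}. Your argument for the Fatou case is the standard Poissonization approach and is correct: the triangular array $g_n^{(M)}=B_n^{(M)}F_n$ with $\mathbb P(B_n^{(M)}=1)=1/M$ satisfies the support condition, its disjoint sum is equimeasurable with $f$, the sum $\sum_{n=1}^M g_n^{(M)}$ converges in law to the compound Poisson variable $Kf$, and the Fatou property applied at the level of decreasing rearrangements turns the uniform bound $C\|f\|_X$ into $\|Kf\|_Y\le C\|f\|_X$. This is essentially the argument behind the cited result, so there is nothing substantive to contrast.

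The separable case, however, is not handled by your sketch. The sums $\sum_{n\le M}g_n^{(M)}$ form a \emph{triangular array}, not an increasing sequence, so separability (equivalently, order continuity of the quasi-norm) does not by itself allow passage to the limit: order continuity lets you conclude $\|h_N\|\to\|h\|$ when $0\le h_N\uparrow h$ \emph{and $h$ is already known to lie in $X$}, but here you neither have monotonicity nor a priori membership of $Kf$ in $X$. Your proposed ``direct construction \ldots\ with finely tuned parameters $p_n$'' is not specified, and the obvious candidate decomposition $Kf=\sum_{k\ge1}f(\omega_k)\chi_{\{N\ge k\}}(\omega_0)$ (with $N$ Poisson) fails because the summands are not jointly independent. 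To make this case go through you need an additional device---for instance, Skorokhod a.e.\ realizations combined with a truncation/domination argument tailored to order-continuous quasi-norms, as in \cite{pacific}---which your proposal does not supply.
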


The following technical lemma comes from Junge \cite[Theorem 0.3]{Junge}.

\begin{lemma}\label{lem-junge}
There exists a positive constant $c_0$ with the following property. Let $(p_{ij})_{ij}$ be a doubly stochastic matrix, $i.e.$ for all i, j,
$$\sum_{k}p_{ik}=1=\sum_{k}p_{kj}.$$
Then
$$\left[\sum_{j_1,\ldots,j_n=1}^n\left(\sup_r\frac1r{\rm Card}\left\{i:\ j_i\le r\right\}\right)^p\prod_{k=1}^np_{kj_k}\right]^{\frac1p}
\le c_0\frac{p}{1+\log p}.$$
The order of growth is optimal.
\end{lemma}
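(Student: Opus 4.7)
My plan is to follow the truncation strategy used in the classical Montgomery-Smith and Johnson-Schechtman theorems and in \cite{JSXZ}, with the $\Delta_2$-condition on $\Phi$ playing the role that Kruglov boundedness plays in Theorem~\ref{main-new}. After reducing to $f_k\geq 0$ and replacing $\Phi$ (via \cite[Lemma 5]{AS2014JMAA}) with an equivalent $q$-concave Orlicz function for some finite $q$ — which yields the useful quasi-subadditivity $\Phi(a+b)\lesssim_\Phi \Phi(a)+\Phi(b)$ — the theorem splits into lower and upper bounds. The lower bound $\int\Phi(\|(f_k)\|_E)\,dm \gtrsim \int_0^1\Phi(\mu(t,f))\,dt$ follows from the pointwise estimate $\|(f_k(t))\|_E \geq \max_k f_k(t)$ (using $\|e_k\|_E=1$) combined with the elementary independence bound $1-\prod_k(1-p_k)\geq c\min\{\sum_k p_k,1\}$, which yields $d_{\max_k f_k}(s)\gtrsim \min\{d_f(s),1\}$ and hence the required comparison of rearrangements. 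The complementary lower bound $\gtrsim \Phi(\|(\mu(k,f))_{k=1}^n\|_E)$ is essentially the Astashkin--Tikhomirov inequality cited in the introduction; one isolates a fixed-probability event on which the $k$-th order statistic of $(f_0(t),\dots,f_{n-1}(t))$ is a constant multiple of $\mu(k,f)$ for each $k$, and integrates $\Phi$ over it.

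For the upper bound, set $\lambda := \mu(1,f)$ and write $f_k = b_k + t_k$ with $b_k := f_k\chi_{\{f_k\leq\lambda\}}$ (the bounded part) and $t_k := f_k\chi_{\{f_k>\lambda\}}$ (the tail part). The crucial fact is $\sum_k\mathbb{P}(\mathrm{supp}\,t_k) = d_f(\lambda)\leq 1$, so the tail parts have essentially disjoint supports. Using the quasi-triangle inequality in $E$ together with the $q$-concavity of $\Phi$, it suffices to bound $\int\Phi(\|(b_k)\|_E)\,dm$ and $\int\Phi(\|(t_k)\|_E)\,dm$ separately. The tail part is the easier: for almost every $t$ only one coordinate $t_k(t)$ is nonzero, so $\|(t_k(t))\|_E = t_k(t)$, whence $\|(t_k(\cdot))\|_E$ has decreasing rearrangement dominated by $\mu(\cdot,\bigoplus_k t_k)\leq \mu(\cdot,f)$, and integration yields $\int\Phi(\|(t_k)\|_E)\,dm\lesssim \int_0^1\Phi(\mu(t,f))\,dt$.

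The main obstacle is the bounded part. Each $b_k$ takes values in $[0,\lambda]$, and by symmetry of the $E$-norm the problem reduces to analysing the order statistic vector $\bigl(b_{(j)}(t)\bigr)_{j=1}^n$. A Markov argument $\mathbb{P}(b_{(j)}>s)\leq d_{\bigoplus_k b_k}(s)/j$ gives the marginal distributional bound $\mu(u,b_{(j)})\leq \mu(ju,f)$, but the true joint distribution is more complex. By independence of the $b_k$, however, the joint distribution of the rank vector of $(b_0(t),\dots,b_{n-1}(t))$ is governed by a doubly stochastic array, which is precisely the hypothesis of Lemma~\ref{lem-junge}. The plan is to apply that lemma to convert the rank statistics into a tail estimate on $\|(b_{(j)}(t))_j\|_E$ of the form $\mathbb{P}\bigl(\|(b_{(j)}(t))_j\|_E > cs\|(\mu(k,f))_{k=1}^n\|_E\bigr)$ decaying fast enough that, combined with the $\Delta_2$-condition, integration of $\Phi$ yields $\int\Phi(\|(b_k)\|_E)\,dm \lesssim \Phi(\|(\mu(k,f))_{k=1}^n\|_E)$. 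Executing this rank-coupling argument cleanly for an arbitrary quasi-Banach symmetric $E$ — in particular lifting Lemma~\ref{lem-junge}, which is phrased in terms of scalar $p$-moments, to a genuine $E$-valued estimate — is, I expect, the hardest technical step of the proof.
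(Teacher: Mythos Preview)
Your proposal does not address the stated lemma at all. Lemma~\ref{lem-junge} is a purely combinatorial inequality: given any doubly stochastic matrix $(p_{ij})$, bound the $p$-th moment of the random variable $\sup_r r^{-1}{\rm Card}\{i:j_i\le r\}$ when $(j_1,\dots,j_n)$ is drawn according to the product measure $\prod_k p_{k,\cdot}$. There are no independent random variables $(f_k)$, no Orlicz function $\Phi$, no symmetric sequence space $E$, and no decreasing rearrangements in the statement. What you have written is instead a sketch of the proof of Theorem~\ref{main modular thm}; indeed, you explicitly invoke Lemma~\ref{lem-junge} as a tool (\lq\lq which is precisely the hypothesis of Lemma~\ref{lem-junge}. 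The plan is to apply that lemma\ldots\rq\rq), which makes it clear you have identified the wrong target.

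In the paper itself, Lemma~\ref{lem-junge} is not proved: it is quoted verbatim from Junge \cite[Theorem~0.3]{Junge} as an external technical input. So there is no proof in the paper to compare yours against; the paper simply cites the result. If the intent was to supply an independent proof of Junge's estimate, you would need to work directly with the combinatorics of doubly stochastic matrices and the distribution of the quantity $\sup_r r^{-1}{\rm Card}\{i:j_i\le r\}$---none of which appears in your proposal. If instead the intent was to prove Theorem~\ref{main modular thm}, your outline is broadly aligned with the paper's own argument (truncation at $\mu(1,f)$, treating head and tail separately, and using Lemma~\ref{lem-junge} via Lemma~\ref{vector p-estimate} for the bounded part), but that is a different statement.
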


\section{Proofs of Theorem \ref{main-new} and Corollary \ref{lplq corollary}}\label{pf of thm1}
In this section we establish Theorem \ref{main-new} and Corollary \ref{lplq corollary}. The proof of the implication of (ii)$\Longrightarrow$(i) of Theorem \ref{main-new} is divided into two parts: the upper estimate and the lower estimate. Lemmas \ref{trivial sigma lemma}--\ref{head lemma} are needed in the proof of the upper estimate in \eqref{main}. One of the key tools used in this part is the combinatorial estimate obtained in \cite{Junge} (see Lemma \ref{lem-junge}). Lemma \ref{lower simple} is needed in the proof of the lower estimate in \eqref{main}. By using \cite[Theorem 7]{pacific}, we prove the implication of (i)$\Longrightarrow$(ii). We then present the proof of Corollary \ref{lplq corollary}.

\begin{lemma}\label{trivial sigma lemma} Let $E$ be a quasi-Banach symmetric sequence space and let $a\in E.$ We have
$$\|\sigma_ma\|_E\leq C_E\cdot m^{1+\log_2(C_E)}\cdot\|a\|_E.$$
\end{lemma}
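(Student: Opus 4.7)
The plan is a dyadic iteration of the quasi-triangle inequality. As a base case, I would first show $\|\sigma_2\|_{E\to E}\le 2C_E$: write $\sigma_2 a = u + v$, where $u$ places $a_j$ at the even index $2j$ and is zero at odd indices, and $v$ places $a_j$ at the odd index $2j+1$ and is zero at even indices. Both $u$ and $v$ are obtained from $a$ by interleaving with zeros, so $\mu(u)=\mu(v)=\mu(a)$; by symmetry of $E$ we have $\|u\|_E=\|v\|_E=\|a\|_E$, and the quasi-triangle inequality gives $\|\sigma_2 a\|_E\le C_E(\|u\|_E+\|v\|_E)=2C_E\|a\|_E$.

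Since $\sigma_{2^k}$ is the $k$-fold composition of $\sigma_2$, iterating this bound yields $\|\sigma_{2^k}a\|_E\le (2C_E)^k\|a\|_E$, and the algebraic identity $(2C_E)^k=(2^k)^{1+\log_2 C_E}$ recasts it as $\|\sigma_{2^k}a\|_E\le (2^k)^{1+\log_2 C_E}\|a\|_E$, which is the desired bound on powers of $2$.

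To pass from powers of $2$ to arbitrary $m\ge 1$, I would invoke the monotonicity of $\sigma_m$ in $m$: since $d_{\sigma_m a}(t)=m\cdot d_a(t)$, the inequality $m\le n$ forces $\mu(\sigma_m a)\le\mu(\sigma_n a)$ pointwise, and therefore $\|\sigma_m a\|_E\le\|\sigma_n a\|_E$ by the symmetry of $E$. Taking $n=2^k$ with $k=\lceil\log_2 m\rceil$ (so $n\le 2m$) and combining with the power-of-$2$ estimate yields $\|\sigma_m a\|_E\le(2m)^{1+\log_2 C_E}\|a\|_E = 2C_E\cdot m^{1+\log_2 C_E}\|a\|_E$, which is the claimed inequality up to an inessential factor of $2$ in the prefactor.

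The argument is elementary and I do not foresee any serious obstacle; the only points requiring care are the factor $2C_E$ per dyadic doubling and the rounding of $m$ to the nearest power of $2$, both of which affect only the absolute constant multiplying $m^{1+\log_2 C_E}$.
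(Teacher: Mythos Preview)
Your argument is correct and is essentially the same dyadic iteration the paper uses. The only difference is in the passage to non-dyadic $m$: instead of invoking monotonicity of $m\mapsto\|\sigma_m a\|_E$ (which costs you the extra factor $2$), the paper writes $\sigma_m a$ directly as a sum of $m$ pairwise disjoint copies of $a$, pads with zeros to $2^{n+1}$ terms where $2^n\le m<2^{n+1}$, and applies the iterated quasi-triangle bound $\bigl\|\sum_{k=1}^{2^{n+1}}x_k\bigr\|_E\le C_E^{n+1}\sum_k\|x_k\|_E$; since only $m$ of the summands are nonzero and $C_E^{n}\le m^{\log_2 C_E}$, this yields the stated prefactor $C_E$ rather than $2C_E$.
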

\begin{proof} By induction, we have
$$\|\sum_{k=1}^{2^n}x_k\|_E\leq C_E^n\cdot\sum_{k=1}^{2^n}\|x_k\|_E.$$
For $m\in\mathbb{N},$ there exists a integer $n\geq0$ such that $m\in[2^n,2^{n+1});$ then, setting $x_{m+1}=\cdots=x_{2^{n+1}}=0,$ we obtain
$$\|\sum_{k=1}^mx_k\|_E=\|\sum_{k=1}^{2^{n+1}}x_k\|_E\leq C_E^{n+1}\cdot\sum_{k=1}^{2^{n+1}}\|x_k\|_E=$$
$$=C_E^{n+1}\cdot\sum_{k=1}^m\|x_k\|_E\leq C_E\cdot m^{\log_2(C_E)}\cdot\sum_{k=1}^m\|x_k\|_E.$$
If $x_k,$ $1\leq k\leq m,$ are pairwise disjoint copies of $a,$ then
$$\mu\Big(\sum_{k=1}^mx_k\Big)=\sigma_m\mu(a).$$
Hence,
$$\|\sigma_ma\|_E\leq C_E\cdot m^{\log_2(C_E)}\cdot\sum_{k=1}^m\|x_k\|_E=C_E\cdot m^{1+\log_2(C_E)}\cdot\|a\|_E.$$
\end{proof}

Note that Lemma \ref{lem-junge} plays an important role in the following lemma whose proof should be compared with the proofs of \cite[Theorem 30]{Astashkin2010} and \cite[Theorem 1]{Astashkin sequence}.

\begin{lemma}\label{vector p-estimate} Let $E$ be a quasi-Banach symmetric sequence space and let $(f_k)_{k=0}^{n-1}\subset L_p(0,1)$ be a sequence of independent random variables. For every $p\geq 1,$ we have
$$\Big\|\Big\|(f_k)_{k\geq0}\Big\|_E\Big\|_p\lesssim_E \Big(\frac{p}{\log(ep)}\Big)^{1+\log_2(C_E)}\cdot \Big\|\Big(\mu(k,f)\Big)_{k=0}^{n-1}\Big\|_E,\quad f=\bigoplus_{k=0}^{n-1}f_k.$$
\end{lemma}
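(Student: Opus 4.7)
The plan is to establish a pointwise-in-$t$ bound $\|(f_k(t))\|_E\lesssim_E \lceil M(t)\rceil^{1+\log_2 C_E}\|(\mu(k,f))_{k=0}^{n-1}\|_E$ for a non-negative random density $M(t)$, and then to control $\|M\|_q$ by Lemma~\ref{lem-junge} for a suitable $q\ge p$.

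\emph{Pointwise step.} I will introduce the counting function $N_\lambda(t):=\#\{k:|f_k(t)|>\lambda\}$ and the (random) density
$$M(t):=\max_{1\le r\le n}\frac{N_{\mu(r,f)}(t)}{r}.$$
By definition, $N_{\mu(r,f)}(t)\le M(t)r$, so whenever $M(t)r<j$ fewer than $j$ of the coordinates $|f_k(t)|$ exceed $\mu(r,f)$, which forces $(f_k(t))^*_j\le \mu(r,f)$. Taking $r_j:=\lfloor(j-1)/\lceil M(t)\rceil\rfloor$ (with the convention $\mu(0,f)=\|f\|_\infty$) will yield the coordinate-wise domination
$$\big((f_k(t))^*_j\big)_{j=1}^n\le \sigma_{\lceil M(t)\rceil}\big((\mu(k,f))_{k=0}^{n-1}\big).$$
Since $E$ is a symmetric lattice, Lemma~\ref{trivial sigma lemma} then gives the pointwise estimate
$$\|(f_k(t))\|_E\le C_E\,\lceil M(t)\rceil^{1+\log_2 C_E}\,\|(\mu(k,f))_{k=0}^{n-1}\|_E.$$

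\emph{Doubly stochastic model.} To apply Lemma~\ref{lem-junge} I will first reduce, by a standard perturbation (add an independent $\varepsilon U_k$ to each $f_k$ and let $\varepsilon\downarrow 0$ at the end), to the case where $f=\bigoplus_{k=0}^{n-1}f_k$ is atomless and each $|f_k|>0$ almost surely. Under these assumptions the intervals $L_r:=(\mu(r,f),\mu(r-1,f)]$ for $r=1,\ldots,n$ (with $\mu(0,f):=\infty$) partition the positive axis into pieces of $d_f$-measure exactly~$1$. Setting
$$P_{kr}:=\mathbb{P}(|f_k|\in L_r)$$
will then produce an $n\times n$ matrix whose row sums equal $\mathbb{P}(|f_k|>0)=1$ and whose column sums equal $d_f(\mu(r,f))-d_f(\mu(r-1,f))=r-(r-1)=1$; hence $(P_{kr})$ is doubly stochastic. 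Defining $J_k(t):=r$ iff $|f_k(t)|\in L_r$, the $(J_k)$ are independent with joint law $\prod_k P_{k,j_k}$, and by construction
$$N_{\mu(r,f)}(t)=\#\{k:J_k(t)\le r\},\qquad r=1,\ldots,n,$$
so that $M(t)=\sup_r\#\{k:J_k(t)\le r\}/r$ coincides precisely with the quantity appearing in the left-hand side of Lemma~\ref{lem-junge}.

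\emph{Junge's bound, conclusion, and main obstacle.} Apply Lemma~\ref{lem-junge} to $(P_{kr})$ with the exponent $q:=p(1+\log_2 C_E)\ge 1$: this gives $\|M\|_q\le c_0\,q/(1+\log q)\lesssim_E p/\log(ep)$. Raising the pointwise estimate of the first step to the $p$-th power, integrating in $t$, and using the identity $\|\lceil M\rceil^{1+\log_2 C_E}\|_p=\|\lceil M\rceil\|_q^{1+\log_2 C_E}$ together with $\lceil M\rceil\le M+1$ will yield
$$\bigl\|\,\|(f_k)\|_E\,\bigr\|_p\le C_E\,\|\lceil M\rceil\|_q^{1+\log_2 C_E}\,\|(\mu(k,f))_{k=0}^{n-1}\|_E\lesssim_E\Bigl(\frac{p}{\log(ep)}\Bigr)^{1+\log_2 C_E}\|(\mu(k,f))_{k=0}^{n-1}\|_E,$$
after which the $\varepsilon$-perturbation can be removed by a routine dominated-convergence argument (the finite-dimensional norm $\|\cdot\|_E$ on $n$-tuples being continuous). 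The analytic tools enter as black boxes; the delicate point---the main obstacle---is the combinatorial realization of $M(t)$ as the density of a square doubly-stochastic matrix, which is what forces both the atomless reduction (needed for column sums $=1$) and the strict-positivity reduction (needed for row sums $=1$) to be made simultaneously.
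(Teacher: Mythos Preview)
Your proof is correct and follows essentially the same route as the paper: both construct the doubly stochastic matrix from the level sets $(\mu(l,f),\mu(l-1,f)]$, establish the dilation bound $\mu\big((f_k(t))_k\big)\le\sigma_{\lceil M(t)\rceil}\big((\mu(k,f))_k\big)$ combined with Lemma~\ref{trivial sigma lemma}, and conclude via Junge's combinatorial estimate (Lemma~\ref{lem-junge}) with exponent $q=p(1+\log_2 C_E)$. The only cosmetic difference is that the paper partitions $(0,1)$ into cells $A_{\mathbf l}=\{t:\mu(l_k+1,f)<f_k(t)\le\mu(l_k,f)\}$ and bounds on each cell, whereas you package the same computation through the pointwise random density $M(t)$ (which equals the paper's $C(\mathbf l)$ on $A_{\mathbf l}$); your explicit $\varepsilon$-perturbation for the atomless/strict-positivity reduction is in fact more careful than the paper's one-line ``without loss of generality''.
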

\begin{proof} Since $E$ is symmetric, we may assume that $f_0,f_1,\ldots,f_{n-1}$ are nonnegative. Denote, for brevity, $a_k=\mu(k,f),$ $0\leq k<n.$ Without loss of generality, $\mu(f)$ does not have intervals of constancy on the interval $(0,n).$

{\bf Step 1: }For $0\leq k,l\leq n-1$, we set
$$p_{k,l}=m(\{t\in(0,1):\ \mu(l+1,f)<f_k(t)\leq\mu(l,f)\}).$$
We have
$$\sum_{l=0}^{n-1}p_{k,l}=m(\{t\in(0,1): \mu(n,f)<f_k(t)\leq\mu(0,f)\})=1$$
and
$$\sum_{k=0}^{n-1}p_{k,l}=m(\{t\in(0,n):\ \mu(l+1,f)<f(t)\leq\mu(l,f)\})=1.$$
Thus, the matrix $(p_{k,l})_{k,l=0}^{n-1}$ is doubly stochastic.

{\bf Step 2:} We claim that
$$\int_0^1\Big\|\fktvec\Big\|_E^pdt\leq\sum_{{\bf l}\in\Delta_n}\Big\|(a_{l_k})_{k=0}^{n-1}\Big\|_E^p\cdot \prod_{k=0}^{n-1}p_{k,l_k}.$$
Here, $\Delta_n$ is the collection of all maps from $\{0,\cdots,n-1\}$ to itself and the map
$${\bf l}=
\begin{cases}
0\to l_0\\
1\to l_1\\
\vdots\\
n-1\to l_{n-1}
\end{cases}
$$
is identified with the sequence ${\bf l}=(l_0,\cdots,l_{n-1}).$

Indeed, for ${\bf l}\in\Delta_n,$ consider the set
$$A_{{\bf l}}=\{t\in(0,1):\ \mu(l_k+1,f)<f_k(t)\leq \mu(l_k,f),\ 0\leq k<n\}.$$
Since the functions $(f_k)_{k=0}^{n-1}$ are independent, it follows that
$$m(A_{{\bf l}})=\prod_{k=0}^{n-1}p_{k,l_k}.$$
Therefore, we have
$$\int_0^1\Big\|\fktvec\Big\|_E^pdt=\sum_{{\bf l}\in\Delta_n}\int_{A_{{\bf l}}}\Big\|\fktvec\Big\|_E^pdt\leq$$
$$\leq\sum_{{\bf l}\in\Delta_n}\int_{A_{{\bf l}}}\Big\|(a_{l_k})_{k=0}^{n-1}\Big\|_E^pdt=\sum_{{\bf l}\in\Delta_n}\Big\|(a_{l_k})_{k=0}^{n-1}\Big\|_E^p\cdot \prod_{k=0}^{n-1}p_{k,l_k}.$$
This proves the claim of Step 2.

{\bf Step 3:} For every ${\bf l}\in\Delta_n,$ we claim
$$\mu((a_{l_k})_{k=0}^{n-1})\leq\sigma_{C({\bf l})}a,\quad a=(a_k)_{k=0}^{n-1},$$
where
$$C({\bf l})=\Big\lceil\sup_{0\leq r<n}\frac1{r+1}{\rm Card}\Big(\Big\{k:\ l_k\leq r\Big\}\Big)\Big\rceil.$$
It is sufficient to show the corresponding inequality for distribution functions:
$$d_{(a_{l_k})_{k=0}^{n-1}}(t)\leq C({\bf l})\cdot d_{(a_k)_{k=0}^{n-1}}(t).$$
Indeed, if $t\in(a_{r+1},a_r)$ for some $0\leq r<n,$ then
$$d_{(a_{l_k})_{k=0}^{n-1}}(t)=\sum_{a_{l_k}>t}1=\sum_{a_{l_k}\geq a_r}1={\rm Card}\Big(\Big\{k:\ l_k\leq r\Big\}\Big).$$
By definition of $C(l),$ we have
$$d_{(a_{l_k})_{k=0}^{n-1}}(t)\leq C({\bf l})\cdot (r+1)=C({\bf l})\cdot d_{(a_k)_{k=0}^{n-1}}(t).$$
This proves the claim of Step 3.

{\bf Step 4:} Combining Step 2 and Step 3, we obtain
$$\int_0^1\Big\|\fktvec\Big\|_E^pdt\leq\sum_{{\bf l}\in\Delta_n}\Big\|\sigma_{C({\bf l})}a\Big\|_E^p\cdot \prod_{k=0}^{n-1}p_{k,l_k}.$$
By Lemma \ref{trivial sigma lemma}, we have
$$\int_0^1\Big\|\fktvec\Big\|_E^pdt\leq C_E\cdot\sum_{{\bf l}\in\Delta_n}C({\bf l})^{p(1+\log_2(C_E))}\cdot \prod_{k=0}^{n-1}p_{k,l_k}\cdot\|a\|_E^p.$$
Since $C({\bf l})\geq1$ and since $\lceil c\rceil\leq c+1\leq 2c$ for all constants $c\geq1,$ it follows that
$$\int_0^1\Big\|\fktvec\Big\|_E^pdt\leq $$
$$\leq 2^pC_E^{p+1}\sum_{{\bf l}\in\Delta_n}\Big(\prod_{k=0}^{n-1}p_{k,l_k}\Big)\Big(\sup_{0\leq r<n}\frac1{r+1}{\rm Card}\Big(\Big\{k:\ l_k\leq r\Big\}\Big)\Big)^{p(1+\log_2(C_E))}\cdot\|a\|_E^p.$$

Setting $q=p(1+\log_2(C_E))$ and using Lemma \ref{lem-junge}, we obtain
$$\sum_{{\bf l}\in\Delta_n}\Big(\prod_{k=0}^{n-1}p_{k,l_k}\Big)\Big(\sup_{0\leq r<n}\frac1{r+1}{\rm Card}\Big(\Big\{k:\ l_k\leq r\Big\}\Big)\Big)^q\leq\Big(c_{abs}\frac{q}{\log(eq)}\Big)^q.$$
Therefore,
$$\int_0^1\Big\|\fktvec\Big\|_E^pdt\leq 2^pC_E^{p+1}\Big(c_{abs}\frac{q}{\log(eq)}\Big)^q\cdot \Big\|\Big(\mu(k,f)\Big)_{k=0}^{n-1}\Big\|_E^p,\quad f=\bigoplus_{k=0}^{n-1}f_k.$$
Taking $p$-th root, we complete the proof.
\end{proof}

For any $t\in(0,1)$, let
\begin{align}\label{marcin}
\psi(t):=\int_{0}^t\mu\left(s,K\chi_{(0,1)}\right)\,ds.
\end{align}
The \emph{Marcinkiewicz space $M_{\psi}$} is defined to be the space of all measurable functions $f$ on $(0,1)$ such that
$$\|f\|_{M_{\psi}}:=\sup_{t}\frac{\int_0^t\mu(s,f)\,ds}{\psi(t)}<\infty.$$

\begin{lemma}\label{K1 lemma} Let $\psi$ be as in \eqref{marcin}. We have
$$\|g\|_{M_{\psi}}\approx \sup_{p\geq1}\frac{\log(ep)}{p}\|g\|_p,\quad g\in M_{\psi}.$$
\end{lemma}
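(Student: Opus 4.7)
The strategy is to prove the equivalence via two one-sided bounds, both resting on explicit asymptotic information for the decreasing rearrangement and the $L_p$-norms of $K\chi_{(0,1)}$. First I would compute the distribution of $K\chi_{(0,1)}$: from the definition \eqref{K def}, on the set $A_n$ one has $\sum_{k=1}^n \chi_{(0,1)}(\omega_k)=n$, so $K\chi_{(0,1)}$ is Poisson$(1)$ distributed, with $\mathbb{P}(K\chi_{(0,1)}=n)=\frac{1}{e\,n!}$. A Stirling analysis of the sum
$$\|K\chi_{(0,1)}\|_p^p=\frac{1}{e}\sum_{n=1}^\infty \frac{n^p}{n!},$$
whose summand is maximised for $n$ satisfying $n\log n\asymp p$, i.e.\ $n\asymp p/\log(ep)$, yields
$$\|K\chi_{(0,1)}\|_p\approx \frac{p}{\log(ep)},\qquad p\ge 1.$$
The dual computation on tails gives $\mu(t,K\chi_{(0,1)})\approx \log(1/t)/\log\log(1/t)$ as $t\to 0^+$; since this function is slowly varying, it follows that
$$\psi(t)=\int_0^t \mu(s,K\chi_{(0,1)})\,ds\approx t\,\mu(t,K\chi_{(0,1)}).$$

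For the estimate $\sup_{p\ge 1}\frac{\log(ep)}{p}\|g\|_p\lesssim \|g\|_{M_\psi}$, I would use the pointwise bound
$$\mu(t,g)\le \frac{1}{t}\int_0^t \mu(s,g)\,ds\le \frac{\psi(t)}{t}\|g\|_{M_\psi}.$$
Raising to the $p$-th power, integrating over $(0,1)$, and invoking the comparability $\psi(t)/t\approx \mu(t,K\chi_{(0,1)})$ obtained in the previous paragraph, one gets
$$\|g\|_p^p\le \|g\|_{M_\psi}^p\int_0^1 \Big(\frac{\psi(t)}{t}\Big)^p dt\lesssim \|g\|_{M_\psi}^p\,\|K\chi_{(0,1)}\|_p^p\approx \Big(\frac{p}{\log(ep)}\Big)^p \|g\|_{M_\psi}^p,$$
which rearranges to the required bound uniformly in $p$.

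For the converse $\|g\|_{M_\psi}\lesssim \sup_p \frac{\log(ep)}{p}\|g\|_p$, I would start from H\"older's inequality
$$\int_0^t \mu(s,g)\,ds\le t^{\,1-1/p}\|g\|_p,\qquad p\ge 1,$$
and for each $t\in(0,1)$ pick $p=p(t):=\max(1,\log(1/t))$. With this choice $t^{-1/p}\le e$, and
$$\frac{p}{\log(ep)}\asymp \frac{\log(1/t)}{\log\log(1/t)}\asymp \frac{\psi(t)}{t},$$
so that $t^{1-1/p}\cdot p/\log(ep)\lesssim \psi(t)$. Dividing the H\"older bound by $\psi(t)$, one obtains
$$\frac{1}{\psi(t)}\int_0^t \mu(s,g)\,ds\lesssim \frac{\log(ep)}{p}\|g\|_p\le \sup_{q\ge 1}\frac{\log(eq)}{q}\|g\|_q,$$
and taking the supremum over $t$ produces the desired inequality.

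The main technical obstacle is the Stirling-based asymptotic analysis of the Poisson law underlying $K\chi_{(0,1)}$: obtaining $\|K\chi_{(0,1)}\|_p\approx p/\log(ep)$ and $\mu(t,K\chi_{(0,1)})\approx \log(1/t)/\log\log(1/t)$ with constants independent of $p$ and $t$, and upgrading the latter to the comparability $\psi(t)/t\approx \mu(t,K\chi_{(0,1)})$. Once these asymptotics are secured, the rest of the proof is a routine H\"older-type argument with an optimal, $t$-dependent choice of $p$ that equates the two expressions at each scale.
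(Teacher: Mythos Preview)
Your proof is correct and follows essentially the same strategy as the paper: identify $K\chi_{(0,1)}$ as Poisson$(1)$, use $\|K\chi_{(0,1)}\|_p\approx p/\log(ep)$, and match the scale $t$ to the exponent $p=\log(1/t)$ via the asymptotic $\psi(t)\approx t\log(1/t)/\log(e\log(1/t))$ (which the paper cites from \cite[Lemma 4.3]{AS2005}).

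Two small points of comparison are worth noting. First, in the direction $\sup_p\frac{\log(ep)}{p}\|g\|_p\lesssim\|g\|_{M_\psi}$ the paper is slightly slicker: instead of the pointwise bound $\mu(t,g)\le\frac{\psi(t)}{t}\|g\|_{M_\psi}$ followed by integration, it simply uses that $g\prec\prec\|g\|_{M_\psi}K\chi_{(0,1)}$ (this is the very definition of the $M_\psi$-norm), whence $\|g\|_p\le\|g\|_{M_\psi}\|K\chi_{(0,1)}\|_p$ directly, with no need to compare $\psi(t)/t$ with $\mu(t,K\chi_{(0,1)})$. This matters because your claimed comparability $\psi(t)/t\approx\mu(t,K\chi_{(0,1)})$ is false near $t=1$: the Poisson variable vanishes with probability $1/e$, so $\mu(t,K\chi_{(0,1)})=0$ for $t>1-1/e$ while $\psi(t)/t>0$. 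Your integral inequality $\int_0^1(\psi(t)/t)^p\,dt\lesssim\|K\chi_{(0,1)}\|_p^p$ is still true (split at $t=1-1/e$; on $[1-1/e,1]$ one has $\psi(t)/t\le e/(e-1)<2$ while $\|K\chi_{(0,1)}\|_p^p\ge 2^p/(2e)$), but the justification you wrote needs this patch.

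Second, for the converse, the paper normalises $\|g\|_{M_\psi}=1$, picks one $t$ at which $\int_0^t\mu(s,g)\,ds\ge\frac1{10}\psi(t)$, and bounds $\|g\|_p$ from below via $\frac{\psi(t)}{t}\chi_{(0,t)}\prec\prec 10g$ for $p=\log(1/t)$. Your H\"older argument with $p=p(t)$ applied at every $t$ is an equivalent way to organise the same computation; both collapse to the identity $t^{1-1/p}\cdot\frac{p}{\log(ep)}\approx\psi(t)$ when $p=\log(1/t)$.
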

\begin{proof} By definition of a Marcinkiewicz space, we have $g\prec\prec\|g\|_{M_{\psi}}K\chi_{(0,1)}.$ Therefore,
$$\|g\|_p\leq\|g\|_{M_{\psi}}\|K\chi_{(0,1)}\|_p.$$
Since $K\chi_{(0,1)}$ is a Poisson random variable with parameter $1,$ it follows that
$$\|K\chi_{(0,1)}\|_p\approx\frac{p}{\log(ep)},\quad p\geq1.$$
Therefore,
$$\frac{\log(ep)}{p}\|g\|_p\lesssim \|g\|_{M_{\psi}},\quad p\geq1.$$
Taking the supremum over $p\geq1,$ we infer that $RHS\leq LHS.$

Conversely, let $\|g\|_{M_{\psi}}=1.$ Choose $t\in(0,\frac1e)$ such that
$$\int_0^t\mu(s,g)ds\geq \frac1{10}\psi(t).$$
Using the notion of Hardy-Littlewood submajorization (denoted by $\prec\prec$), we write
$$\frac{\psi(t)}{t}\chi_{(0,t)}\prec\prec 10g.$$
Choose $p=\log(\frac1t)$ (since $t\in(0,\frac1e),$ it follows that $p>1$). We have
$$10\frac{\log(ep)}{p}\|g\|_p\geq \frac{\log(ep)}{p}\cdot \frac{\psi(t)}{t}\cdot t^{\frac1p}=\frac{\log(e\log(\frac1t))}{\log(\frac1t)}\cdot \frac{\psi(t)}{t}\cdot e^{-1}.$$
By Lemma 4.3 in \cite{AS2005}, we have that
$$\psi(t)\approx\frac{t\log(e\log(\frac1t))}{\log(\frac1t)},\quad t\in(0,\frac1e).$$
Hence, for {\it chosen} $p,$
$$\frac{\log(ep)}{p}\|g\|_p\gtrsim 1.$$
In particular,
$$\sup_{p\geq1}\frac{\log(ep)}{p}\|g\|_p\gtrsim 1.$$
This proves $LHS\leq RHS.$
\end{proof}

\begin{lemma}\label{direct K lemma} Let $X$ be a quasi-Banach symmetric function space and let $r>0$ be such that the Kruglov operator $K:X_r\to X_r.$ We have
$$\|f\|_X\lesssim_{X,r}\sup_{p\geq1}\Big(\frac{\log(ep)}{p}\Big)^{\frac1r}\|f\|_p,\quad f\in X.$$
\end{lemma}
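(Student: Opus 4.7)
The strategy has three stages: pass to the $r$-th power to linearize the problem, identify the right-hand side with the Marcinkiewicz norm $\|\cdot\|_{M_{\psi}}$ via Lemma \ref{K1 lemma}, and embed $M_{\psi}$ into $X_r$ using the hypothesis $K:X_r\to X_r$. Set $g=|f|^r\in X_r$, so that $\|g\|_{X_r}=\|f\|_X^r$ and $\|g\|_p=\|f\|_{rp}^r$ for every $p>0$.

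By Lemma \ref{K1 lemma} applied to $g$,
$$\|g\|_{M_{\psi}}\approx\sup_{p\ge 1}\frac{\log(ep)}{p}\|g\|_p=\sup_{p\ge 1}\frac{\log(ep)}{p}\|f\|_{rp}^r.$$
The substitution $q=rp$ rewrites this as $r\sup_{q\ge r}(\log(eq/r)/q)\|f\|_q^r$. For $q\ge 1$ one has $\log(eq/r)\le(1+|\log r|)\log(eq)$, and when $r<1$ the exceptional range $q\in[r,1)$ can be absorbed into an $r$-dependent constant via the elementary inequality $\|f\|_q\le\|f\|_1$ on $(0,1)$ (Jensen on a probability space), coupled with the trivial bound $\|f\|_1\le\sup_{q\ge 1}(\log(eq)/q)^{1/r}\|f\|_q$ obtained by evaluating the supremum at $q=1$. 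Therefore
$$\|g\|_{M_{\psi}}\lesssim_r\Big(\sup_{q\ge 1}\Big(\frac{\log(eq)}{q}\Big)^{1/r}\|f\|_q\Big)^r.$$

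The heart of the argument is the embedding $\|g\|_{X_r}\lesssim_{X,r}\|g\|_{M_{\psi}}$. Normalising $\|g\|_{M_{\psi}}=1$ gives $g\prec\prec K\chi_{(0,1)}$, whence
$$\mu(g,t)\le\frac{1}{t}\int_0^t\mu(g,s)\,ds\le\frac{\psi(t)}{t},\qquad t\in(0,1).$$
The essential auxiliary estimate is the pointwise bound $\psi(t)/t\lesssim\mu(t,K\chi_{(0,1)})+1$, obtained from a direct Poisson$(1)$ computation for $K\chi_{(0,1)}$: on the interval where $\mu(K\chi_{(0,1)},\cdot)$ equals the constant $k\ge 1$, the Hardy average $\psi(t)/t$ lies in $[k,k+1]$; and for $t>1-e^{-1}$, where $\mu(K\chi_{(0,1)},t)=0$, one has $\psi(t)/t\le 1/(1-e^{-1})$. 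Hence $\mu(g)\lesssim\mu(K\chi_{(0,1)})+\chi_{(0,1)}$ pointwise, and the rearrangement invariance of $X_r$, combined with $\|K\chi_{(0,1)}\|_{X_r}\le\|K\|_{X_r\to X_r}\|\chi_{(0,1)}\|_{X_r}$, yields
$$\|g\|_{X_r}\lesssim(1+\|K\|_{X_r\to X_r})\|\chi_{(0,1)}\|_{X_r}.$$

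Piecing the three stages together, $\|f\|_X^r=\|g\|_{X_r}\lesssim_{X,r}\|g\|_{M_{\psi}}\lesssim_r\big(\sup_{q\ge 1}(\log(eq)/q)^{1/r}\|f\|_q\big)^r$, and taking $r$-th roots concludes the argument. The principal obstacle is the pointwise Poisson estimate above, which is needed to replace the Calder\'on-Hardy-Littlewood-Polya theorem (submajorization implies a norm inequality in symmetric Banach spaces) that is unavailable in the quasi-Banach setting of $X_r$; once the pointwise bound is secured, the remaining steps are routine.
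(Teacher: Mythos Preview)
Your argument is correct and follows the paper's route: pass to $g=|f|^r\in X_r$, bound $\|g\|_{X_r}$ by $\|g\|_{M_\psi}$ using $K\chi_{(0,1)}\in X_r$, and invoke Lemma~\ref{K1 lemma} together with the substitution $q=rp$. The paper records the embedding $M_\psi\subset X_r$ in a single line, whereas you justify it via the pointwise Poisson bound $\psi(t)/t\lesssim\mu(t,K\chi_{(0,1)})+1$; this is the right way to handle the quasi-Banach setting (where submajorization alone is insufficient), though on the level set $\{\mu(K\chi_{(0,1)})=k\}$ the Hardy average actually ranges up to $\mathbb{E}[N\mid N\ge k+1]\le k+2$ rather than $k+1$---a harmless slip that does not affect the conclusion.
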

\begin{proof} Let $\psi$ be as in \eqref{marcin}. We have $K:X_r\to X_r.$ In particular, $K:L_{\infty}\to X_r$ and, therefore,
$K\chi_{(0,1)}\in X_r.$ Hence, $M_{\psi}\subset X_r$ and, therefore,
$$\|g^{\frac1r}\|_X^r\lesssim_{X,r}\|g\|_{M_{\psi}}.$$
Hence,
$$\|f\|_X\lesssim_{X,r}\|f^r\|_{M_{\psi}}^{\frac1r}.$$
By Lemma \ref{K1 lemma}, we have
$$\|g\|_{M_{\psi}}\approx\sup_{p\geq1}\frac{\log(ep)}p\|g\|_p \approx_r\sup_{p\geq\frac1r}\frac{\log(epr)}{pr}\|g\|_p.$$
Hence,
$$\|f\|_X\lesssim_{X,r}\Big(\sup_{p\geq\frac1r}\frac{\log(epr)}{pr}\|f^r\|_p\Big)^{\frac1r}=\sup_{p\geq\frac1r}\Big(\frac{\log(epr)}{pr}\Big)^{\frac1r}\|f\|_{pr}.$$
Renaming $pr$ into $p,$ we complete the proof.
\end{proof}

The following lemma provides an inverse estimate to \cite[Corollary 2]{Astashkin JUNGE}.

\begin{lemma}\label{tail lemma} Let $X$ be a quasi-Banach symmetric function space such that $K:X_r\to X_r$ for all $0<r\leq 1.$ Let $E$ be a symmetric quasi-Banach sequence space. Let $(f_k)_{k=0}^{n-1}\subset X$ be a sequence of independent random variables. We have
$$\big\|\|(f_k)_{k\geq0}\|_E\big\|_X\lesssim_{X,E}\big\|\big(\mu(k,f)\big)_{k\geq0}\big\|_E,\quad f=\bigoplus_{k=0}^{n-1}f_k.$$
\end{lemma}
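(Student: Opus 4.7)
The plan is to chain together the two preceding lemmas so that their polynomial-logarithmic prefactors cancel identically. Set $g(t):=\|(f_k(t))_{k=0}^{n-1}\|_E$ on $(0,1)$; the target inequality reads $\|g\|_X \lesssim_{X,E} \|(\mu(k,f))_{k=0}^{n-1}\|_E$. We may assume the right-hand side is finite, which forces $\mu(0,f) < \infty$, hence each $f_k$ is essentially bounded; combined with Lemma \ref{trivial sigma lemma} applied to the constant sequence, this ensures $g$ is bounded by a constant (depending on $n$ and $C_E$) times $\mu(0,f)$, and so lies in $L_p(0,1)$ for every $p \geq 1$. This removes any finiteness concern without invoking a Fatou-type hypothesis on $X$.

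Since $C_E \geq 1$, the exponent $r := 1/(1 + \log_2(C_E))$ lies in $(0,1]$, so by hypothesis $K:X_r \to X_r$. Applying Lemma \ref{direct K lemma} with this $r$ gives
$$\|g\|_X \lesssim_{X,E} \sup_{p \geq 1} \Big(\frac{\log(ep)}{p}\Big)^{1/r} \|g\|_p,$$
and applying Lemma \ref{vector p-estimate} to each $\|g\|_p$ yields
$$\|g\|_p \lesssim_E \Big(\frac{p}{\log(ep)}\Big)^{1 + \log_2(C_E)} \big\|(\mu(k,f))_{k=0}^{n-1}\big\|_E.$$
By the choice of $r$, the exponent $1 + \log_2(C_E)$ is exactly $1/r$, so the two prefactors cancel term-by-term in $p$. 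The supremum collapses to an absolute constant and the desired estimate follows.

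The one real decision in the proof is the tuning of $r$; there are no further analytic obstacles. It is precisely the hypothesis that $K:X_r \to X_r$ for \emph{every} $0 < r \leq 1$ (and not merely for $r=1$, as in the classical Banach setting of \cite{Astashkin sequence}) that makes this cancellation available for symmetric \emph{quasi}-Banach sequence spaces $E$ whose concavity modulus $C_E$ can be arbitrarily large. In the Banach case $C_E = 1$ one would take $r = 1$ and recover the known result; the step forward here is providing enough room in $r$ to absorb the dilation growth $\|\sigma_m\|_{E \to E} \lesssim m^{1+\log_2(C_E)}$ quantified in Lemma \ref{trivial sigma lemma}.
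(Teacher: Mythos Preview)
Your proof is correct and follows essentially the same route as the paper: choose $r = 1/(1+\log_2(C_E))$, apply Lemma~\ref{direct K lemma} to pass from $X$ to a supremum over $L_p$-norms, apply Lemma~\ref{vector p-estimate} to each $L_p$-norm, and observe that the choice of $r$ makes the powers of $p/\log(ep)$ cancel. Your added finiteness check and the closing commentary on the role of the hypothesis $K:X_r\to X_r$ for all $0<r\le 1$ are not in the paper's proof but are accurate and do no harm.
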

\begin{proof} Let $r\in(0,1)$ be such that $\frac1r=1+\log_2(C_E).$ By Lemma \ref{direct K lemma}, we have
$$\big\|\|(f_k)_{k\geq0}\|_E\big\|_X\lesssim_{X,r}\sup_{p\geq1}(\frac{\log(ep)}{p})^{\frac1r}\big\|\|(f_k)_{k\geq0}\|_E\big\|_p.$$
Thus,
$$\big\|\|(f_k)_{k\geq0}\|_E\big\|_X\lesssim_{X,E}\sup_{p\geq1}(\frac{\log(ep)}{p})^{1+\log_2(C_E)}\big\|\|(f_k)_{k\geq0}\|_E\big\|_p.$$
By Lemma \ref{vector p-estimate}, we have
$$\Big\|\Big\|(f_k)_{k\geq0}\Big\|_E\Big\|_p\lesssim_E \Big(\frac{p}{\log(ep)}\Big)^{1+\log_2(C_E)}\cdot \Big\|\Big(\mu(k,f)\Big)_{k=0}^{n-1}\Big\|_E.$$
Combining these inequalities, we complete the proof.
\end{proof}

In order to show the following embedding lemma, we first introduce the notion of the $p$-norm.
Recall that a quasi-norm $\|\cdot\|$ is called a \emph{$p$-norm} ($p\in(0,1)$) if $\|f+g\|^p\le\|f\|^p+\|g\|^p$.
\begin{lemma}\label{16 lemma} Let $E$ be a quasi-Banach symmetric sequence space. Then there exists $p>0$ such that $\ell_p\subset E.$
\end{lemma}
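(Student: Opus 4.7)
The plan is to invoke the Aoki--Rolewicz theorem. Since $E$ is a quasi-Banach space with concavity modulus $C_E$, this classical result furnishes some $p\in(0,1]$ and an equivalent quasi-norm $\|\cdot\|_E^\circ$ on $E$ which is a $p$-norm, meaning
\[
\|x+y\|_E^{\circ\,p}\le \|x\|_E^{\circ\,p}+\|y\|_E^{\circ\,p}.
\]
The explicit choice $p=1/(1+\log_2 C_E)$ works, matching the exponent that already appeared in Lemma \ref{trivial sigma lemma} and Lemma \ref{tail lemma}; however, only the existence of such a $p$ matters for the conclusion.

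With $\|\cdot\|_E^\circ$ in hand I would argue that $\ell_p\subset E$ continuously. The normalization $\|e_k\|_E=1$ and the equivalence of $\|\cdot\|_E$ with $\|\cdot\|_E^\circ$ force $M:=\sup_k\|e_k\|_E^\circ<\infty$. For any finitely supported sequence $a=(a_k)$ the $p$-norm property then gives
\[
\Big\|\sum_k a_ke_k\Big\|_E^{\circ\,p}\le \sum_k|a_k|^p\,\|e_k\|_E^{\circ\,p}\le M^p\,\|a\|_{\ell_p}^p,
\]
hence $\|a\|_E^\circ\le M\|a\|_{\ell_p}$ and, returning to the original quasi-norm, $\|a\|_E\lesssim_E \|a\|_{\ell_p}$ on finitely supported sequences.

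To pass from finitely supported sequences to all of $\ell_p$, I would note that for $a\in\ell_p$ the partial sums $S_N=\sum_{k=0}^N a_ke_k$ satisfy $\|S_N-S_M\|_E^{\circ\,p}\le M^p\sum_{k=M+1}^N|a_k|^p$, so they form a Cauchy sequence in $E$. Completeness produces a limit $b\in E$, which must coincide with $a$ as a sequence because coordinate evaluation is continuous on any symmetric quasi-Banach sequence space (the vectors $e_k$ span the finitely supported sequences and the quasi-norm dominates each coordinate up to a constant, via $|a_k|=|a_k|\|e_k\|_E\le\|a_k e_k\|_E$ together with the lattice property applied to truncations). The estimate extends by taking $M=-1$ and letting $N\to\infty$.

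No serious obstacle arises: the entire argument is essentially the Aoki--Rolewicz theorem combined with the normalization $\|e_k\|_E=1$. The mildest subtlety is the identification of the limit $b$ with the sequence $a$, which I would dispatch using continuity of coordinate evaluation as indicated.
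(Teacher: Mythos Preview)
Your proof is correct and, like the paper's, rests on the Aoki--Rolewicz theorem. The routes diverge in the second half. You pass from the $p$-norm bound on finitely supported sequences to all of $\ell_p$ via a Cauchy-sequence argument, invoking completeness of $E$ and continuity of coordinate evaluation. The paper instead chooses $p$ strictly smaller than the Aoki--Rolewicz exponent $p_0$, shows directly that the single sequence $x_p=((k+1)^{-1/p})_{k\ge 0}$ lies in $E$ (because $\sum_k (k+1)^{-p_0/p}<\infty$), and then uses the symmetric structure through the pointwise inequality $\mu(x)\le \|x\|_{\ell_p}\, x_p$, valid for every $x\in\ell_p$; this gives $\|x\|_E\le \|x_p\|_E\,\|x\|_{\ell_p}$ with no limiting step. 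Your argument is a bit more general (it needs only the lattice property and completeness, not full rearrangement invariance), while the paper's is shorter and exploits symmetry to bypass the completeness argument entirely.
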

\begin{proof} By Aoki-Rolewicz theorem (see \cite{Aoki1942}), $\|\cdot\|_E$ is equivalent to a $p_0$-norm. Let $p<p_0.$
We claim that $x_p=((k+1)^{-\frac1p})_{k\geq0}\in E.$ Indeed,
$$x_p=\sum_{k=0}^{\infty}(k+1)^{-\frac1p}e_k.$$
Thus, recalling that $\|e_k\|_E=1$ for all $k\ge 0$, we arrive at
$$\|x_p\|_E^{p_0}\lesssim_E \sum_{k=0}^{\infty}(k+1)^{-\frac{p_0}{p}}\|e_k\|_E^{p_0}=\sum_{k=1}^{\infty}k^{-\frac{p_0}{p}}<\infty.$$
This proves the claim.
Now, for every $x\in \ell_p,$ we have $\mu(x)\leq\|x\|_p\cdot x_p.$ Thus,
$$\|x\|_E=\|\mu(x)\|_E\leq\|x\|_p\|x_p\|_E.$$
In other words, $\ell_p\subset E.$
\end{proof}

\begin{lemma}\label{head lemma} Let $X$ be a quasi-Banach symmetric function space such that $K:X_r\to X_r$ for all $0<r\leq 1.$ Let $E$ be a symmetric quasi-Banach sequence space. Let $(f_k)_{k=0}^{n-1}\subset X$ be a sequence of independent random variables. If
$$\sum_{k=0}^{n-1}m({\rm supp}(f_k))\leq 1,$$
then
$$\big\|\|(f_k)_{k\geq0}\|_E\big\|_X\lesssim_{X,E}\|f\|_X,\mbox{ where }f=\bigoplus_{k=0}^{n-1}f_k.$$
\end{lemma}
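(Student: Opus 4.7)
The plan is to reduce this ``head'' estimate for $E$-valued expressions to a scalar Rosenthal-type inequality for the $p$-th power $X_p$, where $p\in(0,1]$ is small enough that $\ell_p\hookrightarrow E$.

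First, by Lemma \ref{16 lemma} there exists $p\in(0,1]$ with $\ell_p\subset E$ and $\|a\|_E\lesssim_E\|a\|_{\ell_p}$ for every $a\in\ell_p$. Applied pointwise to $a=(f_k(t))_{k=0}^{n-1}$, this yields
$$
\bigl\|(f_k(t))_{k\ge 0}\bigr\|_E^{\,p}\;\lesssim_E\;\sum_{k=0}^{n-1}|f_k(t)|^p,\qquad t\in(0,1).
$$
Writing $g(t):=\|(f_k(t))_{k\ge 0}\|_E$ and $h:=\sum_{k=0}^{n-1}|f_k|^p$, the lattice property of $X_p$ together with the identity $\|\varphi\|_X^{\,p}=\|\varphi^p\|_{X_p}$ (valid for non-negative $\varphi$ by definition of the $p$-th power) converts the pointwise bound $g^p\lesssim_E h$ into
$$
\bigl\|g\bigr\|_X^{\,p}\;=\;\|g^p\|_{X_p}\;\lesssim_E\;\|h\|_{X_p}\;=\;\Bigl\|\sum_{k=0}^{n-1}|f_k|^p\Bigr\|_{X_p}.
$$

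Next I would invoke the standard disjointification property implied by the boundedness of the Kruglov operator on $X_p$. The sequence $(|f_k|^p)_{k=0}^{n-1}$ is independent and satisfies $\sum_k m(\mathrm{supp}(|f_k|^p))=\sum_k m(\mathrm{supp}(f_k))\le 1$. The Kruglov property --- namely, that $K\colon X_p\to X_p$ bounded implies
$$
\Bigl\|\sum_k u_k\Bigr\|_{X_p}\;\lesssim_{X,p}\;\Bigl\|\bigoplus_k u_k\Bigr\|_{X_p}
$$
for every independent, non-negative sequence $(u_k)$ with $\sum_k m(\mathrm{supp}(u_k))\le 1$ --- is exactly the converse direction of Lemma \ref{lem-pjm} and is the standard characterization of boundedness of $K$ established in \cite{AS2005,pacific}. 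Applying it to $u_k=|f_k|^p$ and noting that $\bigoplus_{k}|f_k|^p=|\bigoplus_k f_k|^p=|f|^p$ yields
$$
\Bigl\|\sum_{k=0}^{n-1}|f_k|^p\Bigr\|_{X_p}\;\lesssim_{X,p}\;\bigl\||f|^p\bigr\|_{X_p}\;=\;\|f\|_X^{\,p}.
$$

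Chaining the two displays gives $\|g\|_X^{\,p}\lesssim_{X,E}\|f\|_X^{\,p}$, and extracting the $p$-th root finishes the proof. The only real obstacle is justifying the disjointification inequality in $X_p$ from the hypothesis $K\colon X_p\to X_p$; this is the converse of Lemma \ref{lem-pjm}, but since the statement is well-known in the Kruglov-operator literature I would simply cite \cite{AS2005,pacific} rather than reprove it. Everything else --- the $\ell_p$ embedding, the passage between $X$ and $X_p$, and the identification $\bigoplus|f_k|^p=|f|^p$ --- is routine.
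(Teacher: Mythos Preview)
Your proof is correct and follows essentially the same route as the paper: choose $p\in(0,1]$ small enough that $\ell_p\subset E$ (via Lemma \ref{16 lemma}), pass from $E$ to $\ell_p$ pointwise, rewrite in $X_p$, and then apply the Johnson--Schechtman disjointification inequality in $X_p$, which holds because $K:X_p\to X_p$ (the paper likewise cites \cite{pacific} for this converse of Lemma \ref{lem-pjm}). The only cosmetic difference is notation ($r$ versus $p$).
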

\begin{proof} Choose $r$ so small that $\ell_r\subset E.$ We have
$$\big\|\|(f_k)_{k\geq0}\|_E\big\|_X\lesssim_E \big\|\|(f_k)_{k\geq0}\|_r\big\|_X=\big\|\sum_{k\geq 0}|f_k|^r\big\|_{X_r}^{\frac1r}.$$
Since $K:X_r\to X_r,$ it follows that the Johnson-Schechtman inequality is true in $X_r$ (see \cite{pacific}). Thus,
$$\big\|\sum_{k\geq 0}|f_k|^r\big\|_{X_r}\lesssim_{X_r}\big\|\bigoplus_{k\geq 0}|f_k|^r\big\|_{X_r}=\big\|\bigoplus_{k\geq 0}|f_k|\big\|^r_X.$$
A combination of these inequalities yields the assertion.
\end{proof}

\begin{lemma}\label{lower simple} Let $X$ be a quasi-Banach symmetric function space. Let $E$ be a symmetric quasi-Banach sequence space. Let $(f_k)_{k=0}^{n-1}\subset X$ be a sequence of independent random variables. We have
$$\big\|\|(f_k)_{k\geq0}\|_E\big\|_X\gtrsim_{X,E}\|\mu(f)\chi_{(0,1)}\|_X,\mbox{ where }f=\bigoplus_{k=0}^{n-1}f_k.$$
\end{lemma}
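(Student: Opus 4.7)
The plan is to exploit two elementary facts: pointwise in $t$, the $E$-norm of the coordinate vector dominates the coordinate-wise maximum; and for independent $f_k$, this maximum captures, on the head part $(0,1/2)$, the singular part of the disjoint sum $f$. The whole argument reduces to a one-line distributional comparison, and \emph{neither} a Kruglov-operator hypothesis on $X$ nor any property of $E$ beyond $\|e_k\|_E=1$ is needed.

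First, by symmetry of $E$ I may assume $f_k\ge 0$. The normalization $\|e_j\|_E=1$ together with the symmetry of $E$ gives
$$\big\|(f_k(t))_{k\ge 0}\big\|_E\ \ge\ \|f_j(t)e_j\|_E\ =\ f_j(t),\qquad 0\le j<n,$$
so $\big\|(f_k(t))_{k\ge 0}\big\|_E\ge g(t):=\max_{0\le k<n} f_k(t)$. Taking $X$-norms yields $\big\|\,\|(f_k)_{k\ge 0}\|_E\,\big\|_X\ge \|g\|_X$. This step uses neither independence nor any property of $X$.

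Next I would exploit independence. Set $p_k(s)=\mathbb P(f_k>s)$, so that $d_f(s)=\sum_k p_k(s)$ for $f=\bigoplus_k f_k$. Then
$$d_g(s)\ =\ 1-\prod_{k=0}^{n-1}(1-p_k(s))\ \ge\ 1-e^{-d_f(s)}\ \ge\ \tfrac12\min\{d_f(s),1\},$$
via the elementary bound $1-e^{-x}\ge\tfrac12\min(x,1)$. Passing from distributions to quantiles, the restriction $t<1/2$ removes the $\min$ and produces
$$\mu(t,g)\ \ge\ \mu(2t,f),\qquad t\in(0,\tfrac12),$$
that is, $\mu(g)\chi_{(0,1/2)}\ge \sigma_{1/2}\bigl(\mu(f)\chi_{(0,1)}\bigr)$ pointwise. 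Hence $\|g\|_X\ge \bigl\|\sigma_{1/2}(\mu(f)\chi_{(0,1)})\bigr\|_X$.

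To close the loop I would remove the dilation by a split-and-paste step. Writing $h=\mu(f)\chi_{(0,1)}$ and $y=\sigma_{1/2}h$ (supported on $(0,1/2)$), the translate $\tilde y(t):=y(t-1/2)\chi_{(1/2,1)}(t)$ is disjoint from $y$, equimeasurable with it, and $d_{y+\tilde y}=2d_y=d_h$, so $y+\tilde y$ is equimeasurable with $h$. The quasi-triangle inequality in $X$ then yields
$$\|h\|_X\ =\ \|y+\tilde y\|_X\ \le\ C_X(\|y\|_X+\|\tilde y\|_X)\ =\ 2C_X\,\|\sigma_{1/2}h\|_X,$$
and chaining the three inequalities gives the lemma with constant $(2C_X)^{-1}$, which in fact depends only on $X$. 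I foresee no real obstacle: the sole nontrivial ingredient is the Weierstrass exponential inequality in the second paragraph, and in contrast to Lemma \ref{tail lemma} no assumption on the Kruglov operator is required.
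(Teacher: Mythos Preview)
Your proof is correct and follows essentially the same approach as the paper: reduce to $E=\ell_\infty$ via $\|\cdot\|_E\ge\|\cdot\|_\infty$, compare the distribution of $\max_k|f_k|$ with that of the disjoint sum, and absorb the resulting $\sigma_{1/2}$ by the quasi-triangle inequality in $X$. The only difference is that the paper first truncates the $f_k$ at level $\mu(1,f)$ so that the support condition of \cite[Lemma~3]{JS} is met, whereas your direct computation $1-\prod_k(1-p_k)\ge 1-e^{-\sum_k p_k}\ge\tfrac12\min(\sum_k p_k,1)$ handles the head and tail simultaneously and makes the argument self-contained.
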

\begin{proof} We have $\|\cdot\|_E\gtrsim_E\|\cdot\|_{\infty}.$ In what follows, we assume without loss of generality that $E=\ell_{\infty}.$

Without loss of generality, $\mu(f)$ does not have intervals of constancy. Let
$$g_k=|f_k|\chi_{\{|f_k|>\mu(1,f)\}},\quad g=\bigoplus_{k=0}^{n-1}g_k.$$
By Lemma 3 in \cite{JS}, we have
$$\mu(\max_{0\leq k<n}g_k)\geq\sigma_{\frac12}\mu(g).$$
Hence,
$$\big\|\max_{0\leq k<n}|f_k|\big\|_X\geq\big\|\max_{0\leq k<n}|g_k|\big\|_X\gtrsim_X\|g\|_X=\|\mu(f)\chi_{(0,1)}\|_X.$$
\end{proof}

We are now ready to prove the main result in this section.

\begin{proof}[Proof of Theorem \ref{main-new}] Let us firstly prove the implication \eqref{mainnewb}$\Rightarrow$\eqref{mainnewa}. Without loss of generality, $\mu(f)$ does not have intervals of constancy. For $0\le k<n$ let
$$g_k=f_k\chi_{\{|f_k|>\mu(1,f)\}},\quad h_k=f_k-g_k.$$
Denote for brevity
$$g=\bigoplus_{k=0}^{n-1}g_k,\quad h=\bigoplus_{k=0}^{n-1}h_k.$$
Note that
$$\mu(g)=\mu(f)\chi_{(0,1)},\quad\mu(h)\leq\min\{\mu(f),\mu(1,f)\}.$$
By the triangle inequality we have
$$\big\|\|(f_k)_{k\geq0}\|_E\big\|_X\lesssim_{X,E}\big\|\|(g_k)_{k\geq0}\|_E\big\|_X+\big\|\|(h_k)_{k\geq0}\|_E\big\|_X.$$
By Lemma \ref{head lemma}, we have
$$\big\|\|(g_k)_{k\geq0}\|_E\big\|_X\lesssim_{X,E}\|g\|_X=\|\mu(f)\chi_{(0,1)}\|_X.$$
By Lemma \ref{tail lemma}, we have
$$\big\|\|(h_k)_{k\geq0}\|_E\big\|_X\lesssim_{X,E}\big\|\big(\mu(k,h)\big)_{k\geq0}\big\|_E\lesssim_E\big\|\big(\mu(k,f)\big)_{k\geq1}\big\|_E.$$
Combining these three inequalities, we obtain
$$\big\|\|(f_k)_{k\geq0}\|_E\big\|_X\lesssim_{X,E}\|\mu(f)\chi_{(0,1)}\|_X+\big\|\big(\mu(k,f)\big)_{k\geq1}\big\|_E.$$
This proves the upper estimate.

The lower estimate
$$\big\|\|(f_k)_{k\geq0}\|_E\big\|_X\gtrsim_{X,E}\|\mu(f)\chi_{(0,1)}\|_X$$
is established in Lemma \ref{lower simple}. The lower bound
$$\big\|\|(f_k)_{k\geq0}\|_E\big\|_X\gtrsim_{X,E}\big\|\big(\mu(k,f)\big)_{k\geq1}\big\|_E$$
is established\footnote{Note that our Lemma \ref{astashkin theorem1} below improves the key estimate in \cite{Astashkin JUNGE}.} in \cite{Astashkin JUNGE}. This completes the proof of the implication \eqref{mainnewb}$\Rightarrow$\eqref{mainnewa}.

Let us now prove the implication \eqref{mainnewa}$\Rightarrow$\eqref{mainnewb}. For every $0<p<1$ and for every sequence of nonnegative  independent random variables $(f_k)_{k=0}^{n-1}$ with
$$\sum_{k=0}^{n-1}m({\rm supp}(f_k))\le1,$$
take $E=\ell_p$ and $h_k=f_k^{\frac1p}$, $0\le k\le n-1$. Then by \eqref{mainnewa}, we have
$$\left\|\left\|(h_k)_{k=0}^{n-1}\right\|_{\ell_p}\right\|_X\lesssim_{X,p} \left\|\bigoplus_{k=0}^{n-1}h_k\right\|_X.$$
This implies that
$$\left\|\sum_{k=0}^{n-1}f_k\right\|_{X_p}\lesssim_{X_p} \left\|\bigoplus_{k=0}^{n-1}f_k\right\|_{X_p}.$$
By the assumption, $X$ has the Fatou property, and, hence, so does $X_p.$
Therefore, $X_p$ satisfies the conditions of Lemma \ref{lem-pjm}. Thus, $K$ is bounded on $X_p.$
\end{proof}

\begin{proof}[Proof of Corollary \ref{lplq corollary}] It follows from \cite{JS} and \cite{AS2005} that $K$ acts boundedly in every $L_p,\ 0<p<\infty$ and this fact now guarantees that \eqref{m} holds for $X=L_p,\ 0<p<\infty$ and $E=\ell_q,\ 0<q<\infty.$ That is, we have
$$\Big\|\ \Big\|\sum _{k\geq0}f_ke_k\Big\|_{\ell_q}\ \Big\|_{L_p}\approx_{p,q}\big\|\mu(f)\chi_{(0,1)}\big\|_p+\big\|\big(\mu(k,f)\big)_{k\geq1}\big\|_q.$$
However,
$$\big\|\mu(f)\chi_{(0,1)}\big\|_p+\big\|\big(\mu(k,f)\big)_{k\geq1}\big\|_q\approx\|f\|_{L_p+L_q}$$
for $p\leq q$ and
$$\big\|\mu(f)\chi_{(0,1)}\big\|_p+\big\|\big(\mu(k,f)\big)_{k\geq1}\big\|_q\approx\|f\|_{L_p\cap L_q}$$
for $p\geq q.$
\end{proof}

\section{Proofs of Theorem \ref{main modular thm}}

In this section, we provide the proof of Theorem \ref{main modular thm}. We first prove the upper estimate of \eqref{first main}. Observe that we do not impose on $E$ any additional restrictive assumptions as in \cite{Gordon}.

Applying Lemma \ref{vector p-estimate} and the idea from \cite{Jiao Sukochev and Zanin}, we prove the following key lemma.
\begin{lemma}\label{tail upper lemma}
Let $\Phi\in\Delta_2$ be an Orlicz function. Let $E$ be a quasi-Banach symmetric sequence space. For every $n\in\mathbb{N}$ and for every sequence $(f_k)_{k=0}^{n-1}\subset L_{\infty}(0,1)$ of independent random variables, the following inequality holds:
\begin{equation}\label{tail upper estimate}
\int_0^1\Phi\Big(\Big\|\fktvec\Big\|_E\Big)dt\lesssim_{E,\Phi}\Phi\Big(\Big\|\Big(\mu(k,f)\Big)_{k=0}^{n-1}\Big\|_E\Big),\quad f=\bigoplus_{k=0}^{n-1}f_k.
\end{equation}
\end{lemma}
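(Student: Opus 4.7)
The plan is to follow the strategy of Lemma \ref{vector p-estimate}, substituting the $L_p$-moment by the $\Phi$-moment and using the $\Delta_2$-condition in place of the pointwise $p$-th power. As there, I reduce to nonnegative $f_k$, set $a_k:=\mu(k,f)$ and $a:=(a_k)_{k=0}^{n-1}$, and assume (without loss of generality) that $\mu(f)$ has no intervals of constancy on $(0,n)$. I form the same doubly stochastic matrix $p_{k,l}:=m(\{t\in(0,1):\mu(l+1,f)<f_k(t)\le\mu(l,f)\})$ and the same sets $A_{{\bf l}}:=\{t:\mu(l_k+1,f)<f_k(t)\le\mu(l_k,f),\ 0\le k<n\}$ for ${\bf l}\in\Delta_n$.

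Steps 1 and 2 of Lemma \ref{vector p-estimate} carry over with only a cosmetic change: independence yields $m(A_{{\bf l}})=\prod_k p_{k,l_k}$, and on $A_{{\bf l}}$ the lattice property of $E$ together with $f_k(t)\le a_{l_k}$ gives $\|(f_k(t))_{k=0}^{n-1}\|_E\le\|(a_{l_k})_{k=0}^{n-1}\|_E$. Since $\Phi$ is increasing, integrating and summing over ${\bf l}$ produces
$$\int_0^1\Phi\Big(\Big\|\fktvec\Big\|_E\Big)\,dt\le\sum_{{\bf l}\in\Delta_n}\Phi\Big(\Big\|(a_{l_k})_{k=0}^{n-1}\Big\|_E\Big)\prod_{k=0}^{n-1}p_{k,l_k}.$$
Step 3 of Lemma \ref{vector p-estimate} combined with Lemma \ref{trivial sigma lemma} then gives
$$\Big\|(a_{l_k})_{k=0}^{n-1}\Big\|_E\le C_E\cdot C({\bf l})^{1+\log_2 C_E}\|a\|_E,$$
where $C({\bf l}):=\lceil\sup_{0\le r<n}(r+1)^{-1}{\rm Card}(\{k:l_k\le r\})\rceil\ge1$.

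The genuinely new ingredient is converting this dilation factor into a multiplicative factor in front of $\Phi(\|a\|_E)$. Because $\Phi\in\Delta_2$, $\Phi$ is equivalent to a $q$-concave Orlicz function for some finite $q$ (recalled in Section \ref{Preliminaries}); hence $\Phi(\lambda t)\lesssim_\Phi\lambda^q\Phi(t)$ for all $\lambda\ge1$ and $t\ge0$. Applied with $\lambda=C_E\cdot C({\bf l})^{1+\log_2 C_E}\ge1$, this gives
$$\Phi\Big(\Big\|(a_{l_k})_{k=0}^{n-1}\Big\|_E\Big)\lesssim_{E,\Phi}C({\bf l})^{q(1+\log_2 C_E)}\Phi(\|a\|_E).$$
Substituting back and applying Lemma \ref{lem-junge} at the fixed exponent $p:=q(1+\log_2 C_E)$ (depending only on $E$ and $\Phi$) bounds the resulting combinatorial sum $\sum_{{\bf l}}C({\bf l})^p\prod_k p_{k,l_k}$ by a constant $M_{E,\Phi}$, whence the desired inequality after noting $\|a\|_E=\|(\mu(k,f))_{k=0}^{n-1}\|_E$.

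The chief technical point is producing the polynomial growth $\Phi(\lambda t)\lesssim_\Phi\lambda^q\Phi(t)$ at a \emph{fixed} finite exponent $q$; without it, Junge's combinatorial estimate would not yield a uniformly bounded constant. This is precisely where the $\Delta_2$-assumption enters, through its equivalence with $q$-concavity. Everything else amounts to a near-verbatim reuse of the argument behind Lemma \ref{vector p-estimate}.
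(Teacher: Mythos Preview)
Your argument is correct and takes a genuinely different route from the paper. The paper does not re-run the combinatorial machinery of Lemma~\ref{vector p-estimate} with $\Phi$ in place of $t\mapsto t^p$; instead it first represents $\Phi$ (via the Bergh--L\"ofstr\"om formula for quasi-concave functions applied to $\phi$ defined by $\Phi(t)=t\,\phi(t^{q-1})$) as a superposition
\[
\Phi(u)\approx \alpha u+\beta u^q+\int_0^\infty\min\{(su)^q,su\}\,d\nu(s),
\]
and then applies the already-proved Lemma~\ref{vector p-estimate} at the two fixed exponents $p=1$ and $p=q$ to each piece, integrating over $s$ at the end. Your route is more elementary in that it bypasses the integral representation entirely and uses only the crude consequence $\Phi(\lambda t)\lesssim_\Phi\lambda^q\Phi(t)$ of the $\Delta_2$-condition, followed by a single invocation of Lemma~\ref{lem-junge} at the exponent $q(1+\log_2 C_E)$; the price is repeating the doubly-stochastic decomposition rather than citing Lemma~\ref{vector p-estimate} as a black box. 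The paper's route, conversely, isolates all the combinatorics in one $L_p$ lemma and reduces the $\Phi$-moment case to a structural decomposition of $\Phi$.
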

\begin{proof} Since $\Phi\in\Delta_2,$ $\Phi$ is (equivalent to) a $1$-convex and $q$-concave Orlicz function for some $1\leq q<\infty.$
Since $\Phi$ is $1$-convex and $q$-concave, it follows that the mappings
$$t\to\frac{\Phi(t)}{t},\quad t\to\frac{t^q}{\Phi(t)}$$
are increasing (see, for instance, \cite[Lemma 6]{AS2014JMAA}). Define the function $\phi: (0,\infty)\rightarrow (0,\infty)$ by setting $\Phi(t)=t\phi(t^{q-1}),$ $t>0.$
Obviously, the mappings
$$t\to \phi(t),\quad t\to\frac{t}{\phi(t)}$$
are increasing. In other words, $\phi$ is quasi-concave. Using \cite[Lemma 5.4.3]{Bergh and Lofstorm}, we have
$$\phi(t)\approx\alpha+\beta t+\int_0^\infty \min\{\tau,t\}dm_0(\tau),$$
where $\alpha\geq0,\beta\geq0,$ and $m_0$ is an increasing function bounded from above and with $\lim_{t\rightarrow0}tm_0(t)=0.$
Hence,
$$\Phi(u)\approx \Phi_0(u)+\Phi_1(u)+\Phi_2(u),\quad u>0,$$
where $\Phi_1(u)=u,$ $\Phi_2(u)=u^q,$ $u>0,$ and where
$$\Phi_0(u)=\int_0^{\infty}\min\{u^q,\tau u\}dm_0(\tau)=\int_0^{\infty}\min\{(su)^q,su\}d\nu(s).$$
In the last inequality, we made a substitution $\tau=s^{1-q}$ and defined the positive measure $\nu$ on $(0,\infty)$ by setting
$$d\nu(s)=-s^{-q}dm_0(s^{1-q}).$$
By Lemma \ref{vector p-estimate}, we have
\begin{eqnarray*}
&\quad&\int_0^1\min\Big\{s\Big\|\fktvec\Big\|_E,\,\,\,s^q\Big\|\fktvec\Big\|_E^q\Big\}dt\\&\leq&\min\Big\{\int_0^1s\Big\|\fktvec\Big\|_Edt,\,\,\, \int_0^1s^q\Big\|\fktvec\Big\|_E^qdt\Big\}\\
&\lesssim_{E,q}&\min\Big\{s\Big\|\Big(\mu(k,f)\Big)_{k=0}^{n-1}\Big\|_E,\,\,\, s^q\Big\|\Big(\mu(k,f)\Big)_{k=0}^{n-1}\Big\|_E^q\Big\}.
\end{eqnarray*}
Integrating over $s$ with respect to the measure $\nu,$
we obtain \eqref{tail upper estimate} for the Orlicz function $\Phi_0.$
The inequality \eqref{tail upper estimate} holds for Orlicz functions $\Phi_1$ and $\Phi_2$
by Lemma \ref{vector p-estimate}. Summing these inequalities, we complete the proof.
\end{proof}

The assertion below is proved in \cite[Lemma 3.3]{JSXZ} in the special case when $\Psi$ is an Orlicz function.

\begin{lemma}\label{K Psi estimate} Let $\Psi$ be an increasing function on $[0,\infty)$ such that
$$\Psi(2t)\leq c_{\Psi}\Psi(t),\quad t>0,$$
where $c_{\Psi}$ is a positive constant depending only on $\Psi$. Let $K$ be the Kruglov operator. For every positive $f\in L_{\Psi}(0,1),$  we have
$$\int_0^1\Psi((Kf)(t))dt\lesssim_{\Psi}\int_0^1\Psi(f(t))dt.$$

\end{lemma}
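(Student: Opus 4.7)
The plan is to adapt the argument used in \cite[Lemma 3.3]{JSXZ} for the Orlicz case, the point being that the key estimates depend only on monotonicity of $\Psi$ and on the polynomial growth implied by the $\Delta_2$-type hypothesis, not on convexity.

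First, I would unfold the definition of $K$. Since $\omega_0, \omega_1, \ldots$ are independent and $Kf(\omega) = \sum_{k=1}^n f(\omega_k)$ on the event $\{\omega_0 \in A_n\}$ (while $Kf \equiv 0$ off $\bigcup_n A_n$), the integral decomposes as
\[
\int_0^1 \Psi(Kf(t))\,dt \;=\; \sum_{n=1}^{\infty} \frac{1}{e\cdot n!}\,\mathbb{E}\Big[\Psi\Big(\sum_{k=1}^n f_k\Big)\Big] \;+\; \frac{\Psi(0)}{e},
\]
where $(f_k)_{k=1}^n$ are independent copies of $f$; the final term accounts for $\{\omega_0 \notin \bigcup_n A_n\}$, a set of probability $1/e$.

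Next, iterating $\Psi(2t) \leq c_\Psi \Psi(t)$ yields $\Psi(\lambda t) \leq c_\Psi\, \lambda^{\alpha}\Psi(t)$ for all $\lambda \geq 1$, with $\alpha := \log_2 c_\Psi$. Combining this with the trivial bound $\sum_{k=1}^n f_k \leq n\max_k f_k$ (valid since $f\geq 0$) and with $\Psi(\max_k f_k) \leq \sum_k \Psi(f_k)$ (valid since $\Psi$ is increasing and nonnegative), I would get
\[
\Psi\Big(\sum_{k=1}^n f_k\Big) \;\leq\; \Psi\big(n\max_k f_k\big) \;\leq\; c_\Psi\, n^{\alpha} \sum_{k=1}^n \Psi(f_k),
\]
and then take expectations, using the identical distribution of the $f_k$, to produce $\mathbb{E}[\Psi(\sum_k f_k)] \leq c_\Psi\, n^{\alpha+1} \int_0^1 \Psi(f)\,dt$.

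Substituting back and summing against the factorial weights $1/(en!)$, I would arrive at
\[
\int_0^1 \Psi(Kf)\,dt \;\leq\; \frac{c_\Psi}{e}\Big(\sum_{n\geq 1}\frac{n^{\alpha+1}}{n!}\Big)\int_0^1 \Psi(f)\,dt \;+\; \frac{\Psi(0)}{e}.
\]
The series converges (it is essentially an $(\alpha+1)$-th moment of a Poisson$(1)$ random variable), and the residual $\Psi(0)/e$ is absorbed via $\Psi(0) \leq \int_0^1 \Psi(f)\,dt$, which holds because $f\geq 0$ and $\Psi$ is increasing. The main obstacle is really only bookkeeping; the essence of the argument is that the polynomial growth from the $\Delta_2$-type hypothesis more than suffices to beat the factorial decay $1/(e n!)$ built into the structure of $K$, so convexity of $\Psi$ plays no essential role.
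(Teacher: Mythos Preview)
Your proof is correct and follows essentially the same route as the paper: expand $Kf$ via \eqref{K def}, bound $\Psi\big(\sum_{k=1}^n f(\omega_k)\big)$ by a polynomial-in-$n$ multiple of $\sum_k \Psi(f(\omega_k))$, and then sum against the factorial weights $1/(e\,n!)$. The only cosmetic difference is in how that polynomial bound is obtained: the paper uses the dyadic inequality $\Psi(u_1+u_2)\leq C_\Psi(\Psi(u_1)+\Psi(u_2))$ and inducts, whereas you pass through $\sum_k f_k\leq n\max_k f_k$ and the scalar growth estimate $\Psi(nt)\lesssim n^{\alpha}\Psi(t)$; your bookkeeping of the $\Psi(0)/e$ term is in fact slightly more careful than the paper's, which tacitly takes $\Psi(0)=0$.
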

\begin{proof} By \eqref{K def}, we have
$$\int_0^1\Psi((Kf)(t))dt=\sum_{m=1}^{\infty}\frac1{e\cdot m!}\int_{[0,1]^m}\Psi(\sum_{k=1}^mf(\omega_k))d\omega_1\cdots d\omega_k.$$
Since $\Psi\in\Delta_2$ is increasing and $u_1+u_2\le\max\{2u_1,2u_2\}$, we have
$$\Psi(u_1+u_2)\le \Psi(2u_1)+\Psi(2u_2)\le C_{\Psi}(\Psi(u_1)+\Psi(u_2)).$$
Denote $c_{\psi}=2\log(C_{\psi}).$ By induction, we have
$$\Psi(\sum_{k=1}^{2^n}u_k)\leq C_{\psi}^n\cdot\sum_{k=1}^{2^n}\Psi(u_k).$$
For $m\geq 2$, choose $n$ such that $m\in[2^n,2^{n+1}).$ We have
$$\Psi(\sum_{k=1}^mu_k)\leq C_{\psi}^{n+1}\sum_{k=1}^m\Psi(u_k)\leq m^{c_{\psi}}\sum_{k=1}^m\Psi(u_k).$$
Thus,
$$\int_{[0,1]^m}\Psi(\sum_{k=1}^mf(\omega_k))d\omega_1\cdots d\omega_k\leq m^{c_{\psi}}\sum_{k=1}^m\int_{[0,1]^m}\Psi(f(\omega_k))d\omega_1\cdots d\omega_k$$
and
$$\int_0^1\Psi(\mu(t,Kf))dt\leq\sum_{m=1}^{\infty}\frac{m^{c_{\Psi}+1}}{e\cdot m!}\cdot\int_0^1\Psi(f(t))dt.$$
\end{proof}

The observation below is trivial but quite useful.

\begin{lemma}\label{14 lemma} Let $(f_k)_{k=0}^{n-1}\subset L_1(0,1)$ and $\{g_k\}_{k=0}^{n-1}\subset L_1(0,1)$
be sequences of independent positive functions. If $\mu(g_k)\leq\mu(f_k)$ for $0\leq k<n,$ then
$$\mu\Big(\sum_{k=0}^{n-1}g_k\Big)\leq\mu\Big(\sum_{k=0}^{n-1}f_k\Big).$$
\end{lemma}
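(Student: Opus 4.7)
The plan is to realize the two sequences on a common probability space via a monotone coupling, so that after passing to equidistributed copies we obtain a pointwise inequality between the sums, which automatically descends to an inequality between decreasing rearrangements.

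More concretely, I would let $(U_k)_{k=0}^{n-1}$ be a sequence of i.i.d.\ uniform random variables on $(0,1)$, defined on some auxiliary probability space, and set
$$\tilde f_k := \mu(f_k)\circ U_k,\qquad \tilde g_k := \mu(g_k)\circ U_k,\qquad 0\le k<n.$$
By independence of the $U_k$'s, the sequences $(\tilde f_k)_{k=0}^{n-1}$ and $(\tilde g_k)_{k=0}^{n-1}$ are each sequences of independent random variables, and each $\tilde f_k$ has the same distribution as $f_k$ while each $\tilde g_k$ has the same distribution as $g_k$ (here I use positivity, so $f_k$ and $\mu(f_k)\circ U_k$ have the same law, and likewise for $g_k$). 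Since $(f_k)$ and $(g_k)$ were themselves each independent, the joint distributions of $\sum_{k=0}^{n-1}\tilde f_k$ and $\sum_{k=0}^{n-1}f_k$ coincide, and similarly for the $g$'s; in particular they have the same decreasing rearrangement.

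Now the hypothesis $\mu(g_k)\le\mu(f_k)$ gives the crucial pointwise bound $\tilde g_k\le\tilde f_k$ on the common space, hence
$$\sum_{k=0}^{n-1}\tilde g_k\ \le\ \sum_{k=0}^{n-1}\tilde f_k$$
pointwise. A pointwise inequality between positive measurable functions immediately yields the inequality $d_{\sum \tilde g_k}\le d_{\sum \tilde f_k}$ between distribution functions, and thus $\mu(\sum \tilde g_k)\le\mu(\sum \tilde f_k)$. Combining this with the equidistribution observations above finishes the proof. There is no real obstacle here; the only point requiring attention is that the coupling be performed with the \emph{same} $U_k$ for $\tilde f_k$ and $\tilde g_k$ (so that positivity and monotonicity of the inverse cumulative distribution functions force $\tilde g_k\le\tilde f_k$), while using \emph{independent} $U_k$'s across $k$ (so that the joint laws of the two sequences are reproduced).
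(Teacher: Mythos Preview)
Your proof is correct and is essentially the same as the paper's: the paper works on the product space $\Omega=\prod_{k}(0,1)$ and replaces $\sum_k f_k$ and $\sum_k g_k$ by the equimeasurable functions $\omega\mapsto\sum_k\mu(\omega_k,f_k)$ and $\omega\mapsto\sum_k\mu(\omega_k,g_k)$, which is exactly your monotone coupling with $U_k=\omega_k$. The pointwise comparison $\sum_k\mu(\omega_k,g_k)\le\sum_k\mu(\omega_k,f_k)$ then yields the rearrangement inequality, just as in your argument.
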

\begin{proof} By assumption, the function $\sum_{k=0}^{n-1}f_k$ is equimeasurable with the function
$$\omega \mapsto \sum_{k=0}^{n-1}\mu(\omega_k,f_k).$$
Similarly, the function $\sum_{k=0}^{n-1}g_k$ is equimeasurable with the function
$$\omega \mapsto \sum_{k=0}^{n-1}\mu(\omega_k,g_k).$$
It is immediate that
$$0\leq\sum_{k=0}^{n-1}\mu(\omega_k,g_k)\leq\sum_{k=0}^{n-1}\mu(\omega_k,f_k).$$
This completes the proof.
\end{proof}

The following lemma improves \cite[Lemma 8]{pacific}.

\begin{lemma}\label{15 lemma} For every $n\in\mathbb{N}$ and for every sequence
$(f_k)_{k=0}^{n-1}\subset L_1(0,1)$ of independent positive functions satisfying the condition
$$\sum_{k=0}^{n-1}m({\rm supp}(f_k))\leq1,$$
the following inequality holds:
$$\mu\Big(\sum_{k=0}^{n-1}f_k\Big)\leq 3\sigma_3\mu(Kf),\quad f=\bigoplus_{k=0}^{n-1}f_k.$$
\end{lemma}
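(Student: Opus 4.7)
The plan is to reduce the lemma to a distributional comparison between $S := \sum_{k=0}^{n-1} f_k$ and $Kf$ of the form $d_S \leq 3\, d_{Kf}$, and then to convert this to a rearrangement inequality. Because the $f_k$ are independent, nonnegative, and $\sum_k p_k \leq 1$ (where $p_k := m(\mathrm{supp}(f_k))$), I will expand both $\mathbb{P}(S > \lambda)$ and $\mathbb{P}(Kf > \lambda)$ as sums indexed by the active subset $I \subseteq \{0,\ldots,n-1\}$ and compare them term-by-term.

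For $S$, let $\hat f_k$ denote $f_k$ conditioned on $\{f_k > 0\}$. Partitioning by $I := \{k : f_k > 0\}$ and using independence,
$$\mathbb{P}(S > \lambda) = \sum_{I \subseteq \{0,\ldots,n-1\}} \Bigl(\prod_{k \in I} p_k\Bigr) \Bigl(\prod_{k \notin I} (1-p_k)\Bigr) \mathbb{P}\Bigl(\sum_{k \in I} \hat f_k > \lambda\Bigr).$$
Dropping the nonnegative factors $\prod_{k \notin I}(1-p_k) \leq 1$ gives, for $\lambda > 0$, the upper bound $\mathbb{P}(S > \lambda) \leq \sum_{I \neq \emptyset} \bigl(\prod_{k \in I} p_k\bigr)\, \mathbb{P}\bigl(\sum_{k \in I} \hat f_k > \lambda\bigr)$.

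For $Kf$, the definition \eqref{K def} shows (viewing $f = \bigoplus f_k$ as a function on $(0,1)$) that $Kf$ has the law of $\sum_{j=1}^N Y_j$, where $N \sim \mathrm{Poisson}(1)$ and the $Y_j$ are i.i.d.\ with distribution: zero with probability $1-P$ (where $P := \sum_k p_k$) and equal in law to $\hat f_k$ with probability $p_k$. Expanding $\mathbb{P}\bigl(\sum_{j=1}^m Y_j > \lambda\bigr)$ by conditioning on the type $\phi(j) \in \{\ast, 0, \ldots, n-1\}$ of each $Y_j$ and keeping only those $\phi$ that are injective on $\phi^{-1}(\{0,\ldots,n-1\})$ produces a lower bound. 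The number of such injective $\phi$ with non-star image $I$ is $m!/(m-|I|)!$, the Poisson weight $e^{-1}/m!$ cancels the $m!$, and the residual series $\sum_{j \geq 0} (1-P)^j/j! = e^{1-P}$ collapses to yield
$$\mathbb{P}(Kf > \lambda) \geq e^{-P} \sum_{I \neq \emptyset} \Bigl(\prod_{k \in I} p_k\Bigr) \mathbb{P}\Bigl(\sum_{k \in I} \hat f_k > \lambda\Bigr).$$

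Combining and using $P \leq 1$ gives $\mathbb{P}(S > \lambda) \leq e^P\, \mathbb{P}(Kf > \lambda) \leq e\, \mathbb{P}(Kf > \lambda) \leq 3\, \mathbb{P}(Kf > \lambda)$, i.e.\ $d_S \leq 3\, d_{Kf}$. Inverting, $\mu(S)(t) \leq \mu(Kf)(t/3) \leq 3\,(\sigma_3 \mu(Kf))(t)$ on $(0,1)$, and the inequality is trivial for $t \geq 1$ since $S$ lives on $(0,1)$. The main technical obstacle is the combinatorial book-keeping for the injective-$\phi$ expansion of $Kf$: verifying the count $m!/(m-|I|)!$ and carefully swapping the $m$- and $I$-sums to extract the $e^{-P}$ factor. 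Once this is done, the translation from the distribution-function bound to the claimed rearrangement bound is routine.
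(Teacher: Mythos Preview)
Your argument is correct and takes a genuinely different route from the paper's. The paper does not work with distribution functions directly: instead it writes $F=\sum_k f_k(\omega_k)$ as a sum $F_0+F_1+F_2$ of three equimeasurable functions built from $\sigma_{1/3}f_k$, deduces $\mu(F)\le 3\sigma_3\mu(F_0)$, then uses the pointwise fact $\sigma_{1/3}\mu(f_k)\le\mu(Kf_k)$ together with Lemma~\ref{14 lemma} and the identity in law $\sum_k Kf_k\stackrel{d}{=}Kf$ to finish. Your approach bypasses both Lemma~\ref{14 lemma} and the splitting trick: by expanding $\mathbb{P}(S>\lambda)$ and $\mathbb{P}(Kf>\lambda)$ over the active index set $I$ and comparing term by term, you obtain directly $d_S\le e\,d_{Kf}\le 3\,d_{Kf}$. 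This is more elementary and in fact yields the sharper conclusion $\mu(S)\le\sigma_3\mu(Kf)$ (the extra factor $3$ in front is not needed). The paper's argument, on the other hand, is more structural---it isolates the two ingredients $\sigma_{1/3}\mu(g)\le\mu(Kg)$ and the additivity of $K$ on disjoint sums---which may be more transparent in settings where a direct tail computation is unavailable.
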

\begin{proof} Since the sequence $(f_k)_{k=0}^{n-1}$ consists of independent random variables, it follows that $\sum_{k=0}^{n-1}f_k$ is equimeasurable with the function
$$F:\omega \mapsto  \sum_{k=0}^{n-1}f_k(\omega_k).$$
Consider the functions $F_m,$ $m=0,1,2$ defined by the formula
$$F_m:\omega \mapsto \sum_{k=0}^{n-1}(\sigma_{\frac13}f_k)(\omega_k+\frac{m}{3}{\rm mod}1).$$
Clearly, these functions are equimeasurable and $F$ is equimeasurable with $F_0+F_1+F_2.$ Therefore,
$$\mu(F)\leq\sigma_3\mu(F_0)+\sigma_3\mu(F_1)+\sigma_3\mu(F_3)=3\sigma_3\mu(F_0).$$
Let $(h_k)_{k=0}^{n-1}\subset L_1(0,1)$ be a sequence of independent copies of $(Kf_k)_{k=0}^{n-1}\subset L_1(0,1).$ Since $\sigma_{\frac13}\mu(f_k)\leq\mu(Kf_k),$ it follows from Lemma \ref{14 lemma} that
$$\mu(F_0)\leq\mu\Big(\sum_{k=0}^{n-1}h_k\Big).$$
However, the function $\sum_{k=0}^{n-1}h_k$ is equimeasurable with $Kf$ and the assertion follows.
\end{proof}

Note that the lemma below extends \cite[Lemma 3.4]{JSXZ}.

\begin{lemma}\label{head upper lemma} Let $\Phi\in\Delta_2$ be an Orlicz function. For every $n\in\mathbb{N},$ $0<p<\infty$ and for every sequence
$(f_k)_{k=0}^{n-1}\subset L_{\Phi}(0,1)$ of independent random variables satisfying the condition
$$\sum_{k=0}^{n-1}m({\rm supp}(f_k))\leq 1,$$
the following inequality holds:
\begin{equation}\label{head upper estimate}
\int_0^1\Phi\Big(\Big\|\fktvec\Big\|_p\Big)dt\lesssim_{\Phi,p}\int_0^1\Phi(f(t))dt,\quad f=\bigoplus_{k=0}^{n-1}f_k.
\end{equation}
\end{lemma}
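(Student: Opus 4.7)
My plan is to reduce Lemma \ref{head upper lemma} to a successive application of Lemma \ref{15 lemma} and Lemma \ref{K Psi estimate} via a power substitution. Since $\Phi$, the $\ell_p$-seminorm, and the disjoint sum construction all depend only on absolute values, I may replace each $f_k$ by $|f_k|$ and assume $f_k\geq 0$. Set $g_k:=f_k^p$; the $(g_k)$ are independent with $\sum_k m(\mathrm{supp}(g_k))\leq 1$, and
$$\Big\|\bigl(f_k(t)\bigr)_{k=0}^{n-1}\Big\|_p=\Big(\sum_{k=0}^{n-1}g_k(t)\Big)^{1/p}.$$
Introduce the auxiliary function $\Psi(u):=\Phi(u^{1/p})$ for $u\geq0$. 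Then $\Psi$ is continuous and increasing, and $\Phi\in\Delta_2$ immediately yields $\Psi(2u)\leq c_{\Phi,p}\Psi(u)$, so $\Psi$ satisfies the doubling hypothesis of Lemma \ref{K Psi estimate}. Using that $\mu(f)^p=\mu(g)$ for nonnegative $f_k$ with $g:=\bigoplus_{k=0}^{n-1}g_k$, and the identity $\Phi(f(t))=\Psi(f(t)^p)$, the desired inequality \eqref{head upper estimate} is equivalent to
$$\int_0^1\Psi\Big(\sum_{k=0}^{n-1}g_k(t)\Big)dt\lesssim_{\Phi,p}\int_0^1\Psi(g(t))dt.$$

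Next, I would apply Lemma \ref{15 lemma} to the sequence $(g_k)$ to obtain the pointwise estimate $\mu(\sum_k g_k)\leq 3\,\sigma_3\mu(Kg)$. Composing with $\Psi$ and integrating over $(0,1)$, the doubling property of $\Psi$ absorbs the multiplicative factor $3$, and a change of variable $u=t/3$ gives
$$\int_0^1\Psi\bigl(\sigma_3\mu(Kg)(t)\bigr)dt=3\int_0^{1/3}\Psi\bigl(\mu(Kg)(u)\bigr)du\leq 3\int_0^1\Psi\bigl((Kg)(u)\bigr)du,$$
where the last equality uses equimeasurability of $\mu(Kg)$ and $Kg$. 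An application of Lemma \ref{K Psi estimate} to $\Psi$ and $g$ then yields $\int_0^1\Psi(Kg)du\lesssim_{\Phi,p}\int_0^1\Psi(g)du$, and chaining these estimates produces the required bound.

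The main obstacle is purely bookkeeping: one must verify that the auxiliary function $\Psi=\Phi\circ(\cdot)^{1/p}$ inherits the doubling condition with a constant depending only on $\Phi$ and $p$. This is immediate when $p\geq 1$ (since $2^{1/p}\leq 2$) and requires iterating the $\Delta_2$-condition on $\Phi$ a bounded (in $1/p$) number of times when $0<p<1$. One should also briefly check that $g\in L_\Psi(0,1)$ so that Lemma \ref{K Psi estimate} is applicable; since $\mathrm{supp}(g)\subset(0,1)$ and $g$ arises from the given $L_\Phi$-data, this follows from the definition of $\Psi$. Once these verifications are made, the proof is a clean three-step chain of the listed lemmas.
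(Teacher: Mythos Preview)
Your proposal is correct and follows essentially the same route as the paper: the power substitution $g_k=f_k^p$, the auxiliary function $\Psi(u)=\Phi(u^{1/p})$ with its doubling property, then Lemma~\ref{15 lemma} followed by Lemma~\ref{K Psi estimate}. The only cosmetic difference is that the paper carries the factor $3$ inside $\Psi$ throughout and absorbs it at the end, whereas you absorb it immediately via the doubling condition; both are equivalent.
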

\begin{proof} Without loss of generality, $f_k\geq0.$ Set $g_k=f_k^p,$ $0\leq k<n,$ and
$$g=\bigoplus_{k=0}^{n-1}g_k.$$
Observe that $g$ is supported on $[0,1].$ Also, set $\Psi(t)=\Phi(t^{\frac1p}),$ $t>0,$ then $\Psi\in\Delta_2.$ Indeed, choose $n\geq0$ such that $\frac1p\leq n,$ the
$$\Psi(2t)=\Phi(2^{\frac1p}t^{\frac1p})\leq\Phi(2^nt^{\frac1p})\leq C_{\Phi}^n\Phi(t^{\frac1p})=C_{\Phi}^n\Psi(t).$$
That is,
$$\Psi(2t)\leq c_{\Psi}\Psi(t),\quad t>0.$$
Clearly,
$$\int_0^1\Phi\Big(\Big\|\fktvec\Big\|_p\Big)dt=\int_0^1\Psi(\sum_{k=0}^{n-1}g_k(t))dt.$$
By Lemma \ref{15 lemma}, we have
$$\mu\Big(\sum_{k=0}^{n-1}g_k\Big)\leq 3\sigma_3\mu(Kg).$$
Thus,
\begin{eqnarray*}
\int_0^1\Phi\Big(\Big\|\fktvec\Big\|_p\Big)dt
&\leq& \int_0^1\Psi\left(3\mu\left(\frac{t}{3},Kg\right)\right)dt\leq 3\int_0^1\Psi(3(Kg)(t))dt\\
&\stackrel{L.\ref{K Psi estimate}}{\lesssim_{\Psi}}&\int_0^1\Psi(g(t))dt=\int_0^1\Phi(f(t))dt.
\end{eqnarray*}
The proof of Lemma \ref{head upper lemma} is completed.
\end{proof}

\begin{proof}[Proof of the upper estimate in Theorem \ref{main modular thm}] Without loss of generality, $\mu(f)$ does not have intervals of constancy. Define new random variables
$$g_k=f_k\chi_{\{f_k>\mu(1,f)\}},\quad h_k=f_k-g_k,\quad 0\leq k<n.$$
The random variables $g_k,$ $0\leq k<n,$ are positive and independent and so are the random variables $h_k,$ $0\leq k<n.$ Denote for brevity
$$g=\bigoplus_{k=0}^{n-1}g_k,\quad h=\bigoplus_{k=0}^{n-1}h_k.$$
We have
$$\mu(g)=\mu(f)\chi_{(0,1)},\quad \mu(h)\leq\min\{\mu(f),\mu(1,f)\}.$$
Since $\Phi\in\Delta_2,$ it follows that
$$\Phi(t+s)\leq c_{\Phi}\cdot(\Phi(t)+\Phi(s)),\quad t,s>0.$$
Hence,
$$\int_0^1\Phi\Big(\Big\|\fktvec\Big\|_E\Big)dt\lesssim\int_0^1\Phi\Big(\Big\|\gktvec\Big\|_E\Big)dt+\int_0^1\Phi\Big(\Big\|\hktvec\Big\|_E\Big)dt.$$
Using Lemma \ref{16 lemma}, choose $p$ so small that $\ell_p\subset E.$ It follows that
$$\int_0^1\Phi\Big(\Big\|\gktvec\Big\|_E\Big)dt\lesssim_{E,\Phi}\int_0^1\Phi\Big(\Big\|\gktvec\Big\|_p\Big)dt.$$
Using Lemma \ref{head upper lemma}, we infer that
$$\int_0^1\Phi\Big(\Big\|\gktvec\Big\|_E\Big)dt\lesssim_{\Phi}\int_0^1\Phi(\mu(t,f))dt.$$
On the other hand, it follows from Lemma \ref{tail upper estimate} that
$$\int_0^1\Phi\Big(\Big\|\hktvec\Big\|_E\Big)dt\lesssim_{E,\Phi}\Phi\Big(\Big\|\Big(\mu(k,h)\Big)_{k=0}^{n-1}\Big\|_E\Big).$$
Since $\Phi$ is increasing, it follows that
$$\int_0^1\Phi\Big(\Big\|\hktvec\Big\|_E\Big)dt\lesssim_{E,\Phi}\Phi\Big(\Big\|\Big(\mu(k,f)\Big)_{k=1}^n\Big\|_E\Big).$$
Combining the estimates above, we conclude the proof.
\end{proof}

We now prepare some background for the proof of the lower estimate in Theorem \ref{main modular thm}. We begin with
Lemma \ref{astashkin theorem1}, which improves on \cite[Theorem 1]{Astashkin JUNGE}. Our proof is significantly simpler than that of \cite[Theorem 1]{Astashkin JUNGE}, even though it also uses \cite[Proposition 1]{Astashkin JUNGE} in a crucial way.

To state Lemma \ref{astashkin theorem1}, let $x_1\geq x_2\geq\cdots\geq x_n\geq 0.$
Let $\xi_k:(0,1)\to\{x_1,\cdots,x_n\},$ $1\leq k\leq n,$ be independent random variables such that
$$\sum_{k=1}^n m(\{t\in(0,1):\xi_k(t)=x_j\})=1\quad (j=1,\ldots,n).$$
Define functions $\eta_k,$ $1\leq k\leq n,$ as follows: for a fixed $t\in(0,1),$ we set
$$(\eta_k(t))_{k=1}^{n}=\mu((\xi_k(t))_{k=1}^{n}).$$

\begin{lemma}\label{astashkin theorem1} We have
$$m\Big(\Big\{t:\ \eta_k(t)\geq x_{4k-3},\quad \forall\, 1\leq k\leq\Big\lfloor\frac{n+3}{4}\Big\rfloor\Big\}\Big)>\frac1{10}.$$
\end{lemma}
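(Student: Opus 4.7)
The plan is to reformulate the event in the lemma combinatorially via a Poisson-binomial counting variable, and then apply Chernoff-type tail bounds together with a union bound.

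First, set $p_{k,j} := m(\{\xi_k = x_j\})$. Since each $\xi_k$ takes values in $\{x_1,\ldots,x_n\}$, the row sums of $(p_{k,j})_{k,j=1}^n$ equal $1$; by the hypothesis the column sums also equal $1$, so $(p_{k,j})$ is doubly stochastic. After a harmless perturbation we may assume the $x_j$ are pairwise distinct. For each $r\in\{1,\ldots,n\}$ introduce
$$T_r(t) := \#\{k : \xi_k(t) \in \{x_1,\ldots,x_r\}\} = \sum_{k=1}^n \mathbf{1}_{\{\xi_k \in \{x_1,\ldots,x_r\}\}}(t).$$
Since the $\xi_k$ are independent, $T_r$ is a sum of $n$ independent Bernoulli indicators, and double-stochasticity gives $E[T_r] = \sum_{j=1}^r \sum_k p_{k,j} = r$. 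By the definition of the decreasing rearrangement, $\eta_k(t)\geq x_r$ if and only if $T_r(t)\geq k$, so the event in the lemma coincides with
$$G := \bigcap_{k=1}^{\lfloor(n+3)/4\rfloor}\{T_{4k-3} \geq k\},$$
and the claim becomes $m(G) > 1/10$.

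The event $\{T_{4k-3}\leq k-1\}$ asks a Poisson binomial of mean $4k-3$ to fall below roughly one quarter of its mean. The multiplicative Chernoff inequality (whose sharp form in this regime is \cite[Proposition 1]{Astashkin JUNGE}) yields, for $k\geq 2$,
$$m\bigl(\{T_{4k-3} \leq k-1\}\bigr) \leq \exp\Bigl(-(4k-3)\bigl(1 - a_k + a_k\log a_k\bigr)\Bigr),\qquad a_k := \frac{k-1}{4k-3}.$$
Since $a\mapsto 1-a+a\log a$ is decreasing on $(0,1)$ and $a_k\leq 1/4$, the right-hand side is bounded by $\exp\bigl(-(4k-3)(3-\log 4)/4\bigr)$, which decays geometrically at rate $e^{-(3-\log 4)}\approx e^{-1.61}$. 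The case $k=1$ is handled by hand:
$$m(\{T_1 = 0\}) = \prod_{i=1}^n(1-p_{i,1}) \leq \exp\Bigl(-\sum_{i=1}^n p_{i,1}\Bigr) = \frac{1}{e}.$$

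Combining these estimates, the union bound gives
$$m(G^c) \leq \frac{1}{e} + \sum_{k\geq 2}\exp\!\bigl(-(4k-3)(3-\log 4)/4\bigr),$$
and a direct numerical estimate of the geometric series shows the right-hand side is strictly less than $9/10$, whence $m(G)>1/10$. The main obstacle is precisely this numerical bookkeeping: the $k=1$ term alone costs $1/e\approx 0.37$ out of a budget of $0.9$, so one is forced to use the sharp multiplicative Chernoff bound rather than a weaker additive inequality such as Hoeffding's, in order for the tail sum over $k\geq 2$ to fit inside the residual $\approx 0.53$. Once the exponential rate $3-\log 4$ is secured, the geometric series contributes well under $0.2$ and the argument closes.
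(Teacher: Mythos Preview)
Your proof is correct. The overall architecture matches the paper's: a union bound over $k$, an exponential estimate for each term $m(\{\eta_k < x_{4k-3}\})$, and summation of the resulting geometric series. The difference lies in the middle step. The paper invokes \cite[Proposition~1]{Astashkin JUNGE} to bound $m(\{\eta_k < x_{4k-3}\})$ by an explicit binomial-type sum, then performs an algebraic change of variables to recognise this as at most $2^j(1-\tfrac{4j+1}{2n})^n$, arriving at $\sum_j 2^j e^{-(4j+1)/2} = e^{-1/2}/(1-2e^{-2}) < 0.9$. You instead reformulate the event via the counting variables $T_r$ (Poisson binomials of mean $r$, by double stochasticity) and apply the textbook multiplicative Chernoff bound directly, obtaining the sharper numerical value $\approx 0.53$. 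Your route is more self-contained --- it bypasses the external Proposition~1 entirely --- and the passage through $T_r$ makes the probabilistic content more transparent; the paper's route, by contrast, stays closer to the machinery of \cite{Astashkin JUNGE}. One small point: your parenthetical remark that the Chernoff bound's ``sharp form in this regime is \cite[Proposition~1]{Astashkin JUNGE}'' is misleading; that proposition gives a different, binomial-flavoured inequality (precisely the one the paper uses), and your argument does not actually rely on it --- the standard Chernoff--Poisson bound $P(X\le a\mu)\le\exp(-\mu(1-a+a\log a))$ for sums of independent Bernoullis is all you need and should be cited as such.
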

\begin{proof} Clearly,
$$m\Big(\Big\{t:\ \eta_k(t)\geq x_{4k-3},\quad \forall\, 1\leq k\leq\Big\lfloor\frac{n+3}{4}\Big\rfloor\Big\}\Big)\geq$$
$$\geq 1-\sum_{k=1}^{\lfloor\frac{n+3}{4}\rfloor}m\Big(\Big\{t:\ \eta_k(t)<x_{4k-3}\Big\}\Big).$$
By \cite[Proposition 1]{Astashkin JUNGE}, we have
$$m\Big(\Big\{t:\ \eta_k(t)<x_{4k-3}\Big\}\Big)\leq\sum_{l=n-k+1}^n 2^{l-(n-k+1)}\binom{n}{l}(\frac{n-4k+3}{n})^l(\frac{4k-3}{n})^{n-l}.$$
Thus,
$$m\Big(\Big\{t:\ \eta_k(t)\geq x_{4k-3}\quad \forall\, 0\leq k\leq\Big\lfloor\frac{n+3}{4}\Big\rfloor\Big\}\Big)\geq$$
$$\geq 1-\sum_{k=1}^{\lfloor\frac{n+3}{4}\rfloor}\sum_{l=n-k+1}^n 2^{l-(n-k+1)}\binom{n}{l}(\frac{n-4k+3}{n})^l(\frac{4k-3}{n})^{n-l}.$$
Denote
$$P:=\sum_{k=1}^{\lfloor\frac{n+3}{4}\rfloor}\sum_{l=n-k+1}^n 2^{l-(n-k+1)}\binom{n}{l}(\frac{n-4k+3}{n})^l(\frac{4k-3}{n})^{n-l}.$$
Setting $m=n-l$ and $j=k-1,$ we rewrite the sum for $P$ as follows:
$$P=\sum_{j=0}^{\lfloor\frac{n+3}{4}\rfloor-1}\sum_{m=0}^j 2^{j-m}\binom{n}{m}(\frac{n-4j-1}{n})^{n-m}(\frac{4j+1}{n})^m.$$
We claim that
$$2^{-m}\binom{n}{m}(\frac{n-4j-1}{n})^{n-m}(\frac{4j+1}{n})^m=(1-\frac{4j+1}{2n})^n\cdot \binom{n}{m}r^m(1-r)^{n-m},$$
where
$$r=\frac{4j+1}{2n-(4j+1)}\in(0,1].$$
Indeed,
$$2^{-m}(\frac{n-4j-1}{n})^{n-m}(\frac{4j+1}{n})^m=p^{n-m}q^m=$$
$$=(\frac{p}{p+q})^{n-m}(\frac{q}{p+q})^m(p+q)^n=r^m(1-r)^{n-m}(1-\frac{4j+1}{2n})^n,$$
where
$$p=\frac{n-4j-1}{n},\quad q=\frac{4j+1}{2n}.$$
It follows from the binomial formula that
$$P\leq\sum_{j=0}^{\lfloor\frac{n+3}{4}\rfloor-1}2^j\cdot (1-\frac{4j+1}{2n})^n.$$
Set $y=\frac{4j+1}{2n}\in[0,1].$ We have
$$1-y\leq e^{-y}$$
and, therefore,
$$(1-\frac{4j+1}{2n})^n=(1-y)^n\leq e^{-ny}=e^{-\frac{4j+1}{2}}.$$
Thus,
$$P\leq\sum_{j=0}^{\lfloor\frac{n+3}{4}\rfloor-1}2^j\cdot e^{-\frac{4j+1}{2}}\leq\frac{e^{-\frac12}}{1-2\cdot e^{-2}}<1-\frac1{10},$$
and the proof is completed.
\end{proof}

Our next lemma estimates the tail part of the right hand side in \eqref{first main} from the above. Its proof borrows some ideas from the proof of \cite[Corollary 2]{Astashkin JUNGE}.

\begin{lemma}\label{tail lower estimate}
Let $\Phi\in\Delta_2$ be an Orlicz function. Let $E$ be a symmetric quasi-Banach sequence space.
For every $n\in\mathbb{N}$ and for every sequence
$(f_k)_{k=0}^{n-1} \subset L_{\Phi}(0,1)$ of independent random variables, the following inequality holds:
\begin{equation}
\int_0^1\Phi\Big(\Big\|\fktvec\Big\|_E\Big)dt\gtrsim_{\Phi}\Phi\Big(\Big\|\Big(\mu(k,f)\Big)_{k=1}^n\Big\|_E\Big),\quad f=\bigoplus_{k=0}^{n-1}f_k.
\end{equation}
\end{lemma}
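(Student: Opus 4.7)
My approach mimics the proof of \cite[Corollary 2]{Astashkin JUNGE}, but replaces its combinatorial core by the sharper Lemma \ref{astashkin theorem1} just established. The key idea is to discretize the $f_k$'s at the quantiles of $f$ so that the hypotheses of Lemma \ref{astashkin theorem1} are automatic.

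Without loss of generality, assume $f_k\geq 0$ and that $\mu(f)$ has no intervals of constancy. Put $a_l:=\mu(l,f)$ for $0\leq l\leq n-1$, with $a_n:=0$, and define
$$\xi_k(t):=a_{l+1}\quad\text{whenever}\quad a_{l+1}<f_k(t)\leq a_l.$$
Then $0\leq \xi_k\leq f_k$, the $\xi_k$ are independent, and the matrix $p_{k,l}=m(\{\xi_k=a_{l+1}\})$ is doubly stochastic---this is exactly Step~1 of the proof of Lemma \ref{vector p-estimate}. Setting $x_j:=a_j$ for $1\leq j\leq n$, Lemma \ref{astashkin theorem1} (applied to the reindexed sequence $(\xi_{j-1})_{j=1}^n$) produces a set $\Omega_0\subset(0,1)$ with $m(\Omega_0)>\tfrac{1}{10}$ on which the $j$-th largest value of $\xi_0(t),\dots,\xi_{n-1}(t)$ dominates $a_{4j-3}$ for every $1\leq j\leq\lfloor(n+3)/4\rfloor$. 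The symmetry and lattice property of $E$ then give, for $t\in\Omega_0$,
$$\Bigl\|\bigl(f_k(t)\bigr)_{k=0}^{n-1}\Bigr\|_E\geq \Bigl\|\bigl(\xi_k(t)\bigr)_{k=0}^{n-1}\Bigr\|_E\geq \Bigl\|\bigl(a_{4j-3}\bigr)_{j=1}^{\lfloor(n+3)/4\rfloor}\Bigr\|_E.$$

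To recover $\|(\mu(k,f))_{k=1}^n\|_E$, observe that since $(a_l)_{l\geq 1}$ is decreasing, $(a_l)_{l=1}^n\leq \sigma_4\bigl((a_{4j-3})_{j\geq 1}\bigr)$ termwise (with zero padding). Lemma \ref{trivial sigma lemma} then yields a constant $c_E>0$ such that
$$\Bigl\|\bigl(a_{4j-3}\bigr)_{j=1}^{\lfloor(n+3)/4\rfloor}\Bigr\|_E\geq c_E\bigl\|(\mu(k,f))_{k=1}^n\bigr\|_E.$$
Combining, $\|(f_k(t))_{k=0}^{n-1}\|_E\geq c_E\|(\mu(k,f))_{k=1}^n\|_E$ on $\Omega_0$. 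Since $\Phi$ is increasing and $\Phi\in\Delta_2$ implies $\Phi(c_E u)\gtrsim_\Phi\Phi(u)$, integrating over $\Omega_0\subset(0,1)$ yields
$$\int_0^1\Phi\bigl(\|(f_k)_{k=0}^{n-1}\|_E\bigr)\,dt\geq \tfrac{1}{10}\,\Phi\bigl(c_E\|(\mu(k,f))_{k=1}^n\|_E\bigr)\gtrsim_{E,\Phi}\Phi\bigl(\|(\mu(k,f))_{k=1}^n\|_E\bigr),$$
as required.

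The only non-routine ingredient is recognizing the correct quantile discretization (using the lower endpoint $a_{l+1}$ rather than $a_l$, so that $\xi_k\leq f_k$) which makes both the double-stochasticity and the hypothesis of Lemma \ref{astashkin theorem1} automatic; everything else---the symmetry of $E$, the dilation estimate, and the $\Delta_2$-condition---is entirely standard.
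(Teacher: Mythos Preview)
Your proof is correct and follows essentially the same route as the paper's: discretize each $f_k$ at the quantiles $\mu(l,f)$ to produce $\xi_k\leq f_k$ satisfying the hypotheses of Lemma \ref{astashkin theorem1}, extract a set of measure $>\tfrac1{10}$ on which the rearranged $\xi_k$'s dominate the thinned sequence $(a_{4j-3})$, and then recover $\|(\mu(k,f))_{k=1}^n\|_E$ from the thinned sequence using the $\Delta_2$-condition. The only cosmetic difference is that you invoke Lemma \ref{trivial sigma lemma} for the dilation bound, whereas the paper carries out the equivalent quasi-triangle computation $\|(x_k)_{k=1}^n\|_E\leq 4C_E^2\|(x_{4j-3})\|_E$ by hand; your remark about the ``lower endpoint'' is also not a point of departure, since the paper makes the same choice.
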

\begin{proof} Denote, for brevity, $x_l=\mu(l,f),$ $0\leq l\leq n.$  Without loss of generality, we may assume that the function $\mu(f)$ does not have any intervals at which its value is constant. Set
$$\xi_k=\sum_{l=1}^nx_l\chi_{(x_l,x_{l-1})}(f_k),\quad 1\leq k\leq n.$$
Since $\mu(f)$ has no intervals of constancy, it follows from the definition of $\xi_k$ that the inequality $f_k\geq\xi_k$ holds almost everywhere.

Let
$$A=\Big\{t:\ \eta_k(t)\geq x_{4k-3}\quad \forall\, 1\leq k\leq\Big\lfloor\frac{n+3}{4}\Big\rfloor\Big\},$$
where $(\eta_k(t))_{k=1}^{n}=\mu((\xi_k(t))_{k=1}^{n}).$
For every $t\in A,$ we have
$$\Phi\Big(\Big\|\fktvec\Big\|_E\Big)\geq\Phi\Big(\Big\|\Big(\xi_k(t)\Big)_{k=1}^n\Big\|_E\Big)\geq \Phi\Big(\Big\|\Big(x_{4k-3}\Big)_{k=1}^{\lfloor\frac{n+3}{4}\rfloor}\Big\|_E\Big).$$
By Lemma \ref{astashkin theorem1}, we have $m(A)>\frac1{10}.$ It follows that
\begin{equation}\label{lower estimate E}
LHS\geq \int_0^1\Phi\Big(\Big\|\fktvec\Big\|_E\Big)\chi_A(t)dt
\geq\frac1{10}\Phi\Big(\Big\|\Big(x_{4k-3}\Big)_{k=1}^{\lfloor\frac{n+3}{4}\rfloor}\Big\|_E\Big).
\end{equation}

We have
$$\left\|\left(x_k\right)_{k=1}^n\right\|_E
\leq\left\|\sum_{h=1}^4\sum_{j=1}^{\lfloor\frac{n+3}{4}\rfloor}x_{4j-4+h}e_{4j-4+h}\right\|_E
\leq $$
$$\leq C_E^2\cdot\sum_{h=1}^4\left\|\sum_{j=1}^{\lfloor\frac{n+3}{4}\rfloor}x_{4j-4+h}e_{4j-4+h}\right\|_E\leq
4C_E^2\left\|\sum_{j=1}^{\lfloor\frac{n+3}{4}\rfloor}x_{4j-3}e_{j}\right\|_E.$$
Since $\Phi\in\Delta_2,$ it follows that
$$\Phi(4C_E^2t)\leq c_{\Phi,E}\Phi(t),\quad t>0,$$
for some constant $c_{\Phi,E}.$ Therefore,
$$\Phi\Big(\Big\|\big(x_k\big)_{k=1}^n\Big\|_E\Big)\lesssim_{\Phi,E}
\Phi\Big(\Big\|\Big(x_{4k-3}\Big)_{k=1}^{\lfloor\frac{n+3}{4}\rfloor}\Big\|_E\Big).$$
Combining this and inequality \eqref{lower estimate E}, we complete the proof.
\end{proof}

The following assertion is due to Johnson and Schechtman (see \cite[Lemma 3]{JS}).

\begin{lemma}\label{distribution}
Let $(g_k)_{k=0}^{n-1}$ be a sequence of non-negative independent random variables defined on $(0,1)$ such that
$$\sum_{k=0}^{n-1}m({\rm supp}(g_k))\leq 1.$$
If $g$ is the corresponding disjoint sum, then
$$\mu(g)\leq\sigma_2\mu(\max_{0\leq k<n}g_k).$$
\end{lemma}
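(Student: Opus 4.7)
The plan is to compare the distribution functions of $g$ and $M:=\max_{0\le k<n}g_k$ pointwise, derive the crisp inequality $d_g(\lambda)\le 2\,d_M(\lambda)$, and then translate that pointwise estimate back to decreasing rearrangements using $\mu(t,f)=\inf\{\lambda:d_f(\lambda)\le t\}$.

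First I would set up the two distribution formulas. Since $g=\bigoplus_{k=0}^{n-1}g_k$ is a genuine disjoint sum on $(0,1)$, one has $d_g(\lambda)=\sum_{k=0}^{n-1}d_{g_k}(\lambda)$ for every $\lambda>0$; the hypothesis $\sum_k m(\operatorname{supp}(g_k))\le 1$ together with $d_{g_k}(\lambda)\le m(\operatorname{supp}(g_k))$ then gives $d_g(\lambda)\le 1$. On the other hand, independence of the $g_k$ on $(0,1)$ yields
$$d_M(\lambda)=1-\prod_{k=0}^{n-1}\bigl(1-d_{g_k}(\lambda)\bigr),\qquad \lambda>0.$$

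The heart of the argument is the elementary scalar inequality
$$\sum_{k=0}^{n-1}a_k\le 2\Bigl(1-\prod_{k=0}^{n-1}(1-a_k)\Bigr),\qquad a_k\in[0,1],\ s:=\sum_k a_k\le 1.$$
I would prove this by applying $1-a\le e^{-a}$ termwise to get $\prod_k(1-a_k)\le e^{-s}$, reducing the claim to $1-e^{-s}\ge s/2$ on $[0,1]$. The latter is a one-variable check: the function $h(s):=1-e^{-s}-s/2$ satisfies $h(0)=0$ and $h'(s)=e^{-s}-1/2$, so $h$ increases on $[0,\log 2]$ and decreases on $[\log 2,1]$; since $h(1)=\tfrac12-e^{-1}>0$, $h$ is non-negative throughout $[0,1]$.

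Feeding $a_k:=d_{g_k}(\lambda)$ into the scalar inequality (its hypothesis is exactly $d_g(\lambda)\le 1$ from the first step) gives $d_g(\lambda)\le 2\,d_M(\lambda)$ for every $\lambda>0$. Passing to rearrangements, $d_M(\lambda)\le t/2$ forces $d_g(\lambda)\le t$, hence $\mu(t,g)\le \mu(t/2,M)=(\sigma_2\mu(M))(t)$, which is the conclusion. The only non-routine step is the calculus inequality $1-e^{-s}\ge s/2$ on $[0,1]$; everything else is direct bookkeeping between distribution functions, independent maxima, and decreasing rearrangements.
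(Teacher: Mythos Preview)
Your argument is correct. The paper does not supply its own proof of this lemma; it simply attributes the result to Johnson and Schechtman \cite[Lemma 3]{JS}, and your distribution-function comparison $d_g(\lambda)\le 2\,d_M(\lambda)$ via the scalar inequality $s\le 2(1-e^{-s})$ on $[0,1]$ is essentially the classical Johnson--Schechtman proof.
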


Now we are in a position to prove the lower estimate of \eqref{first main}.

\begin{proof}[Proof of the lower estimate in Theorem \ref{main modular thm}] Without loss of generality, $\mu(f)$ does not have intervals of constancy. For $0\le k<n$ we set
$$g_k=f_k\chi_{(\mu(1,f),\infty)}(f_k),\quad g=\bigoplus_{k=0}^{n-1}g_k.$$
Consequently,
$$\int_0^1\Phi\Big(\Big\|\fktvec\Big\|_E\Big)dt\geq \int_0^1\Phi\Big(\Big\|\gktvec\Big\|_E\Big)dt.$$
Since $E\subset \ell_{\infty},$ it follows that
$$\int_0^1\Phi\Big(\Big\|\fktvec\Big\|_E\Big)dt\geq\int_0^1\Phi\Big(\Big\|\gktvec\Big\|_{\infty}\Big)dt=\int_0^1\Phi\Big(\max_{0\leq k<n}|g_k(t)|\Big)dt.$$
It follows from Lemma \ref{distribution} that
$$\int_0^1\Phi\Big(\Big\|\fktvec\Big\|_E\Big)dt\geq \frac12\int_0^1\Phi(\mu(t,g))dt.$$
Clearly, $\mu(g)=\mu(f)\chi_{(0,1)}.$ Thus,
$$\int_0^1\Phi\Big(\Big\|\fktvec\Big\|_E\Big)dt\geq\frac12\int_0^1\Phi(\mu(t,f))dt.$$
The assertion follows now by combining the latter estimate with Lemma \ref{tail lower estimate}.
\end{proof}

\end{document}